\newcommand{\Sum}{\displaystyle\sum\limits}
\newcommand{\ceil}[1]{\left\lceil{#1}\right\rceil}
\newcommand{\RR}{\mathbb{R}}
\newcommand{\eps}{\varepsilon}
\newcommand{\ol}{\overline}
\renewcommand{\le}{\leqslant}
\renewcommand{\ge}{\geqslant}
\renewcommand{\hat}{\widehat}
\newtheorem{theorem}{Theorem}[section]
\newtheorem{corollary}{Corollary}[theorem]
\newtheorem{lemma}[theorem]{Lemma}
\newtheorem{assumption}[theorem]{Assumption}
\newtheorem{definition}[theorem]{Definition}
\newtheorem{remark}[theorem]{Remark}
\newtheorem{proposition}[theorem]{Proposition}
\newcommand{\circledOne}{\text{\ding{172}}}
\newcommand{\circledTwo}{\text{\ding{173}}}
\newcommand\numberthis{\addtocounter{equation}{1}\tag{\theequation}}
\DeclareMathOperator*{\argmax}{arg\,max}
\DeclareMathOperator{\kernel}{Ker}
\def\changes#1{{\color{black}#1}} 
\def\changesr#1{{\color{black}#1}} 
\def\changesm#1{{\color{black}#1}} 
\def\changesbr#1{{\color{black}#1}} 
\pgfplotsset{
	table/search path={plot_figures},
}
\pgfplotsset{compat=1.14}
\begin{document}
	
	\title{Optimal distributed convex optimization on slowly time-varying graphs}
	\author{Alexander~Rogozin$^{*}$~\IEEEmembership{}
		C\'esar~A.~Uribe$^{*}$~\IEEEmembership{}
		Alexander Gasnikov~\IEEEmembership{}
		Nikolay Malkovsky~\IEEEmembership{}
		Angelia Nedi\'{c}~\IEEEmembership{}
		\thanks{A. Rogozin (\textit{aleksandr.rogozin@phystech.edu}) is with Moscow Institute of Physics and Technology. C.A. Uribe (\textit{cauribe@mit.edu}) is with the Laboratory for Information and Decision Systems (LIDS),
			Massachusetts Institute of Technology. A. Gasnikov (\textit{gasnikov@yandex.ru}) is with the Moscow Institute of Physics and Technology, and the Institute for Information Transmission Problems RAS. N. Malkovsky (\textit{malkovskynv@gmail.com}) is with National Research University Higher School of Economics, Russia. A. Nedi\'{c} (\textit{angelia.nedich@asu.edu}) is with the \changesr{School of Electrical, Computer and Energy Engineering}, Arizona State University, and Moscow Institute of Physics and Technology.}%
		\thanks{The work of A. Nedi\'{c} and C.A.\ Uribe is supported by the National Science Foundation under grant no.\ CPS~15-44953. The work of A. Gasnikov is supported by Russian President grant no. MD-1320.2018.1. and RFBR no. 18-31-20005 mol\_a\_ved. \changesr{The work of A. Gasnikov and C.A. Uribe was partially supported by Yahoo! Research Faculty Engagement Program.}}%
		\thanks{\changesr{$^{*}$AR and CAU contributed equally.}}%
	}
	
	\markboth{}%
	{Optimal distributed convex optimization on slowly time-varying graphs}
	\maketitle
	
	\begin{abstract}
		We study optimal \changes{distributed} first-order optimization algorithms when \changes{the network (i.e., communication constraints between the agents) changes with time}. This problem is motivated by scenarios where agents experience network malfunctions. \changesr{Under specific constraints on the dual function, we provide a sufficient condition that guarantees a convergence rate with optimal (up to logarithmic terms) dependencies on the network and function parameters if the network changes are constrained to a \text{small} percentage $\alpha$ of the total number of iterations. We call such networks \textit{slowly} time-varying networks.} Moreover, we show that Nesterov's method has \changesr{an} iteration complexity of \changes{$\Omega\left( \left(\sqrt{\kappa_\Phi\cdot\bar\chi} + \alpha  \log(\kappa_\Phi\cdot\bar\chi)\right)\log({1}/{\eps})\right)$} for decentralized algorithms, where $\kappa_\Phi$ is condition number of the objective function, and $\bar\chi$ is a worst case bound on the condition number of the sequence of communication graphs. \changes{Additionally, we provide an explicit upper bound on $\alpha$ in terms of the condition number of the objective function and network topologies.}
	\end{abstract}
	
	\begin{IEEEkeywords}
		distributed optimization, time-varying graph, accelerated method.
	\end{IEEEkeywords}
	
	\IEEEpeerreviewmaketitle
	
	\section{Introduction}
	\IEEEPARstart{I}{ncreasing} amounts of data and privacy constraints in distributed storage systems, as well as the distributed nature of data sources, has driven the development of distributed optimization algorithms that can be executed over networks. For example, consider the machine learning problem with a vector of parameters $y\in\RR^d$ and a loss function $L(\mathbf A, y)$, where $\mathbf A$ is a training set of $l$ samples, and each sample is a vector of $\RR^m$. Moreover, assume the dataset $\mathbf A$ is not available in the memory of a single computer due to its size and communication costs, but is divided into $n$ parts $\{\mathbf{A_i}\}_{i=1}^n$ and stored on $n$ different machines. Therefore, one seeks to take into account the information constraints induced by the distributed nature of the data.
	
	The distributed data generation and storage requires the study of the fundamental performance limits of distributed optimization algorithm that can be executed over a network~\cite{sca17,sca2018,he2018,uribe2018dual,Sun2018,li2018sharp}. One primary objective is to understand whether one can achieve the same convergence rate of centralized algorithms by using distributed methods. In \cite{Nedic2017achieving}, the authors first showed that distributed algorithms could achieve linear convergence rates when optimizing sums of strongly convex and smooth functions, in comparison with previous algorithms such as the distributed sub-gradient \cite{Nedic2009}. In \cite{Qu2017,Jakovetic}, the authors show that one can accelerate distributed algorithms and achieve convergence rate close to centralized methods. However, dependencies on the function parameter and network topology were not optimal. In \cite{sca17}, the authors proposed a dual-based approach \cite{Wu2017,Zhang2017} and provided the first result on complexity lower bounds for distributed optimization over networks for sums of strongly convex and smooth functions. Later in \cite{sca2018}, the authors extended these results to non-smooth problems or non-strongly convex problems. It was shown that distributed optimization algorithms could achieve the same convergence rates as their centralized counterparts with an additional multiplicative cost related to the communication network. Recent work has shown complexity lower bounds for distributed non-convex optimization problems as well~\cite{Sun2018}. 
	
	Existing approaches show optimal convergence rates where graphs are assumed fixed~\cite{sca17, gasnikov2018universal, lan2017communication}. We focus on the case where the network is allowed to \textit{change with time}. These changes occur, for example, due to technical malfunctions and loss of connectivity between nodes \cite{gasnikov2018universal}. As a result, the change in topology induces a change in the distributed problem formulation \changes{and leads to time-varying optimization}. These changes can come both from changing cost function and changing constraints~\cite{simonetto2014distributed, chen2017bandit}.  \changes{Time-varying problems with continuous time \cite{rahili2017distributed, fazlyab2016self_triggered}} and discrete time \cite{bernstein2018online, tang2018running} have been studied before. Also, there are distributed algorithms that can be executed over time-varying networks and achieve linear convergence rates, such as DIGing \cite{Nedic2017achieving}, Push-Pull Gradient Method \cite{Pu2018}, PANDA \cite{Maros2018} and \changes{Acc-DNGD~\cite{Qu2017}}. \changes{Nevertheless, optimal convergence rate dependencies with respect to the network and function parameters  are not yet fully understood. We provide a comparative performance analysis with such methods.} 
	
	In this paper, we study the convex optimization problem
	{\small
		\begin{align}\label{eq:initial_problem_0}
			\varphi(y) = \sum_{i=1}^{n} \varphi_i(y) \longrightarrow \min_{y\in\RR^d},
		\end{align}
	}
	where $\varphi_i:\RR^d\to\RR$ is a convex function for each $i = 1,\cdots,n$. We focus on the distributed problem where each of the functions $\varphi_i$ is privately held by a computational entity in a network. That is, each node or agent~$i$ on a network has access to $\varphi_i$ only, and yet, the group of agents seek to solve the optimization problem in~\eqref{eq:initial_problem_0} by repeated interactions with other agents following the communication constraints imposed by the network. The interactions between the agents are driven by a sequence of graphs $\{\mathcal{G}_k\}_{k=1}^{\infty}$, where $\mathcal{G}_k = (V,E_k)$ is a connected undirected graph with $V = \{1,\ldots,n\}$ and $E_k$ is a set of edges such that $(j,i)\in E_k$ if a pair of nodes $i,j\in V$ can communicate at time instant $k$, see Figure~\ref{fig:erdos_changing}.
	
	\begin{figure}[ht!]
	\centering
		\subfigure{\includegraphics[width=0.23\linewidth]{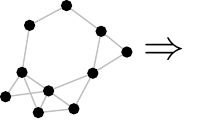}}
		\subfigure{\includegraphics[width=0.23\linewidth]{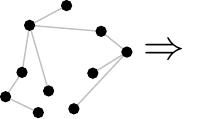}}
		\subfigure{\includegraphics[width=0.23\linewidth]{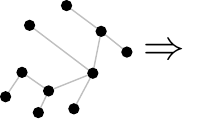}}
		\subfigure{\includegraphics[width=0.15\linewidth]{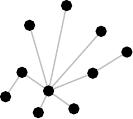}}
	\caption{A sequence of graphs with $10$ nodes each, but with different edge set at each time instant.} 
	\label{fig:erdos_changing}
\end{figure}
	
	We analyze how first-order methods behave when the objective function changes from time to time (under some restrictions to be specified later), with a special interest in distributed optimal accelerated methods~\cite{gasnikov2018universal}. \changesr{Our main contribution is  a sufficient condition that certifies that an optimal rate can be achieved, with an additional cost proportional to the number of changes in the network, expressed as a fraction of the number of iterations. Whether this condition is also necessary is left for future work.}
	
	This paper is organized as follows. Problem statement and dual formulation are described in Section~\ref{sec:problem_statement}. Section~\ref{sec:results} presents the general analysis for first-order methods with changing functions; we include the convergence rates of the gradient descent and Nesterov's fast gradient method. In Section~\ref{sec:main_results} we build upon the results of Section~\ref{sec:results} to propose an algorithm for distributed optimization over time-varying graphs and we provide its convergence rate. We compare our results with other distributed algorithms in Section \ref{sec:discussion}. In Section \ref{sec:numerical_experiments}, we provide numerical experiments to illustrate and numerically evaluate our theoretical results. Finally, some conclusive notes and future work are described in Section \ref{sec:conclusion}. 
	
	\vspace{-0.2cm}
	\section{Problem statement}\label{sec:problem_statement}
	
	In this section, initially we recall some basic definitions, then we present a formulation of the distributed optimization problem that incorporates the communication constraints induced by the network. The use of the network constraints allows for the formulation of a dual problem with a suitable structure for distributed computation. Finally, we formally pose the problem of distributed optimization over time-varying networks.
	
	\vspace{-0.3cm}
	\subsection{Preliminaries}
	This paper is focused on $\mu$-strongly convex $L$-smooth functions.
	\begin{definition}
		Let $f$ be a differentiable function on $\mathbb{X}$, where $\mathbb{X} = \RR^n$ or $\mathbb{X} = \RR^{d\times n}$. We say that $f$ $\mu$-strongly convex ($\mu > 0$) w.r.t. $\|\cdot\|$ if
		
		\vspace{-0.4cm}
		{\small
			\begin{align*}
				\forall x, y\in \mathbb{X}:\ f(y)\ge f(x) + \langle\nabla f(x), y-x\rangle + \frac{\mu}{2}\|y-x\|^2.
			\end{align*}
		}
		
		\vspace{-0.3cm}
		Moreover,  we say that $f$ $L$-smooth w.r.t. $\|\cdot\|$ if $\nabla f(x)$ is $L$-Lipschitz continuous w.r.t. to the dual norm $\|\cdot\|_*$, i.e.,
		{\small
			\begin{align*}
				\forall x, y\in\mathbb{X}:\ f(y)\le f(x) + \langle\nabla f(x), y - x\rangle + {L}/{2}\|y - x\|^2.
			\end{align*}
		}
	\end{definition}
	
	For simplicity of exposition we will present our results mainly on the $2$-norm $\|\cdot\|_2$ in $\RR^n$ and Frobenius norm $\|\cdot\|_F$ in $\RR^{d\times n}$. Note that $\|\cdot\|_{2*} = \|\cdot\|_2$ and $\|\cdot\|_{F*} = \|\cdot\|_F$. We also denote $\|\cdot\|_{op}$ the operator norm in $\RR^{n\times n}$ generated by $\|\cdot\|_2$, which is define as $
	\|A\|_{op} = \sup_{x\in\RR^n} \|Ax\|_2 / \|x\|_2$.

	\begin{definition}\label{def:scalar_product_frobenius}
		The scalar product for $X, Y\in\RR^{d\times n}$ is given by $
		\langle X, Y\rangle = \sum_{i=1}^d \sum_{j=1}^n X_{ij}Y_{ij}$ .
		The Frobenius norm is given by $
		\|X\|_F = \sqrt{\langle X, X\rangle}$.
	\end{definition}
	
	\begin{definition}\label{def:conjugate}
		Let $\mathbb{X}$ be a Euclidean space with a scalar product $\langle\cdot, \cdot\rangle$ and $\phi: \mathbb{X}\rightarrow\RR$. Then the conjugate function to $\phi$, denoted by $\phi^*$, is given by $
		\phi^*(Y) = \underset{X\in\mathbb{X}}{\sup} \big(\langle X, Y\rangle - \phi(X)\big)$, and the dual norm $\|Y\|_*$ is defined as $
		\|Y\|_* = \underset{X\in\mathbb{X}}{\sup}\{\langle X, Y\rangle: \|X\| \le 1\}$.
	\end{definition}
	
	\vspace{-0.4cm}
	\subsection{Dual problem formulation for static graphs}\label{subsec:gossip_matrix_and_problem_reformulation}
	
	Problem~\eqref{eq:initial_problem_0} can be equivalently rewritten as
	
	\vspace{-0.4cm}
	{\small
		\begin{equation}\label{eq:initial_problem_2}
			\Sum_{i=1}^n \varphi_i(y_i) \longrightarrow\min\ s.t.\ y_1=\cdots=y_n,\ y_i\in\RR^d\ \forall i \in V.
		\end{equation}
	}
	
	
	\vspace{-0.4cm}
	Additionally, the consensus constrains in~\eqref{eq:initial_problem_2} can be equivalently represented by the communication constraints imposed by the network topology. Particularly, we define the Laplacian of the graph $\mathcal{G}$ as: $[W]_{ij} = -1$ if $(i,j) \in E$, $[W]_{ij} = 	\text{deg}(i)$ if $i= j$, and $[W]_{ij} = 0$ otherwise, where $\text{deg}(i)$ is the degree of the node $i$, i.e., the number of neighbors of the node.
	
	
	If a graph $\mathcal{G}$ is undirected and connected. Then, the Laplacian matrix $W$ is symmetric and positive semidefinite. 
	\changesm{Moreover, by Perron-Frobenius theorem \cite{nikaido1972convex}, it holds that $YW = 0$ if and only if $Y\sqrt{W} = 0$ if and only if $y_1 = \cdots = y_n$}.  Therefore, for static graphs ~\eqref{eq:initial_problem_2} is equivalent to
	
	\vspace{-0.2cm}
	{\small
		\begin{equation}\label{eq:initial_problem_3}
			\Phi(Y) = \Sum_{i=1}^n \varphi_i(y_i) \longrightarrow\min_{Y\sqrt{W}=0},
		\end{equation}
	}
	
	\vspace{-0.2cm}
	\noindent where $Y = [y_1,\ldots, y_n ]\in\RR^{d\times n}$. 
	Note that $Y$ is a matrix in $\RR^{d\times n}$ consisting of local copies $y_i$ of the decision vector $y$ in the original problem \eqref{eq:initial_problem_0}. Using this equivalent constraint leads us to the dual function 
	
	\vspace{-0.4cm}
	{\small
		\begin{align}\label{eq:dual_problem_2}
			f(X)  & = \max_{Y\in\RR^{d\times n}} \left[-\langle X, Y\sqrt{W}\rangle - \Phi(Y)\right].
	\end{align}}
	
	\vspace{-0.5cm}
	\begin{remark}
		It is not necessary to restrict our attention to the Laplacian of the graph as communication matrix. We may use arbitrary positive weights and weighted degrees that follow the same sparsity structure. This is equivalent to using constraints of form $\omega_{ij}y_i=\omega_{ji}y_j$ with $\omega_{ij}>0$ instead of $y_i=y_j$. All the required properties are induced, and the rest of the analysis stays the same. On the other hand, it gives more flexibility for practical purposes, e.g., proper weight choice can induce better \changes{conditioning~\cite{boyd2004fastest}.} However, choosing specific weights is a separate question and therefore stays beyond our scope.
	\end{remark}
	
	Now that we have related the optimization problem with the network structure and the communication constraints it imposes, in this subsection, we show the connection between properties of function $\Phi(Y)$ in \eqref{eq:initial_problem_3} and the dual function $f(X)$ given in \eqref{eq:dual_problem_2}.
	
	\begin{lemma}[ Lemma~$3.1$ in \cite{Beck2014}, Proposition $12.60$ in~\cite{rockafellar2011variational}, Theorem $1$ in~\cite{nes05}, Theorem~$6$ in~\cite{kakade2009on}]\label{lem:kakade_dual_smoothness}
		Let $\phi$ be a closed convex function. Then $\phi$ is $\mu$-strongly convex w.r.t. $\|\cdot\|$ if and only if $f$ is ${1}/{\mu}$-smooth w.r.t. $\|\cdot\|_*$.
	\end{lemma}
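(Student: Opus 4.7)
The plan is to prove both directions using the Fenchel--Young inequality and the well-known bijection between gradients of $\phi$ and gradients of its conjugate $\phi^*$. Since the statement is a classical result (the excerpt itself cites four references), I will structure the plan around the two standard implications and only highlight where the norm/dual-norm pairing enters.

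First I would establish the forward implication: assuming $\phi$ is $\mu$-strongly convex w.r.t.\ $\|\cdot\|$, show that $f = \phi^*$ is $(1/\mu)$-smooth w.r.t.\ $\|\cdot\|_*$. Fix $Y_1, Y_2 \in \mathbb{X}^*$ and let $X_i \in \argmax_{X} \{\langle X, Y_i\rangle - \phi(X)\}$ for $i=1,2$. Strong convexity of $\phi$ guarantees that these maximizers are unique and that $\nabla f(Y_i) = X_i$ (by Danskin/envelope). Writing the first-order optimality conditions for $X_1, X_2$ and adding the two strong-convexity inequalities
\begin{align*}
\phi(X_2) &\ge \phi(X_1) + \langle \nabla \phi(X_1), X_2 - X_1\rangle + \tfrac{\mu}{2}\|X_2 - X_1\|^2,\\
\phi(X_1) &\ge \phi(X_2) + \langle \nabla \phi(X_2), X_1 - X_2\rangle + \tfrac{\mu}{2}\|X_1 - X_2\|^2,
\end{align*}
gives $\mu\|X_1 - X_2\|^2 \le \langle \nabla\phi(X_1) - \nabla\phi(X_2), X_1 - X_2\rangle = \langle Y_1 - Y_2, X_1 - X_2\rangle$, where the last equality uses $\nabla\phi(X_i)=Y_i$. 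Applying the generalized Cauchy--Schwarz inequality $|\langle \cdot,\cdot\rangle| \le \|\cdot\|_*\|\cdot\|$ yields $\|X_1 - X_2\| \le (1/\mu)\|Y_1 - Y_2\|_*$, i.e., $\|\nabla f(Y_1) - \nabla f(Y_2)\| \le (1/\mu)\|Y_1 - Y_2\|_*$. A standard one-line argument (integrating $\nabla f$ along the segment) then upgrades this Lipschitz-gradient bound to the descent-lemma form stated in the definition of $(1/\mu)$-smoothness.

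For the reverse direction I would invoke the biconjugacy theorem: since $\phi$ is closed and convex, $\phi^{**} = \phi$. Applying the forward direction to $f = \phi^*$ (which is convex and closed whenever the sup is finite) shows that if $f$ is $(1/\mu)$-smooth w.r.t.\ $\|\cdot\|_*$ then $f^* = \phi$ is $\mu$-strongly convex w.r.t.\ $\|\cdot\|_{**} = \|\cdot\|$. This requires the small auxiliary fact that $\|\cdot\|_{**} = \|\cdot\|$ in the finite-dimensional Euclidean setting we are in, which is immediate from Definition \ref{def:conjugate}. For this direction one also needs a converse of the forward argument: namely, $(1/\mu)$-smoothness of $f$ plus convexity implies the co-coercivity bound $\langle \nabla f(Y_1) - \nabla f(Y_2), Y_1 - Y_2\rangle \ge \mu\|\nabla f(Y_1) - \nabla f(Y_2)\|^2$, which transfers into $\mu$-strong convexity of $\phi$ via the same inverse-gradient correspondence.

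The main obstacle is really only bookkeeping: being careful about the pairing between $\|\cdot\|$ and $\|\cdot\|_*$ and about closedness/properness of $\phi$ so that $\phi^{**}=\phi$ holds. Since all four cited references (Beck--Teboulle, Rockafellar--Wets, Nesterov, Kakade--Shalev-Shwartz--Tewari) provide this proof verbatim, I would in practice just cite them and give the forward computation above as an illustrative sketch rather than rewriting the full argument.
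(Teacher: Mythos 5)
The paper does not prove this lemma at all: it is quoted as a known result with citations to Beck--Teboulle, Rockafellar--Wets, Nesterov, and Kakade et al., which is exactly what you say you would do in practice. Your sketch of the standard argument --- strong monotonicity of $\partial\phi$ plus the generalized Cauchy--Schwarz inequality $\langle X,Y\rangle\le\|X\|\,\|Y\|_*$ for the forward direction, and biconjugacy $\phi^{**}=\phi$ combined with the co-coercivity inequality for the converse --- is correct and matches the cited references, modulo the bookkeeping you already flag (first-order optimality should read $Y_i\in\partial\phi(X_i)$ rather than $\nabla\phi(X_i)=Y_i$ when $\phi$ is not differentiable, and the converse genuinely requires the separate co-coercivity step rather than a literal reapplication of the forward direction, as you correctly note).
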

	
	Lemma~\ref{lem:kakade_dual_smoothness} allows us to establish the relationship between strong convexity and smoothness of functions $\Phi(Y)$ in \eqref{eq:initial_problem_3} and $f(X)$ in \eqref{eq:dual_problem_2}. This relationship is formally stated in the next theorem, which extends the results of \cite{kakade2009on} on matrices.
	
	\begin{theorem}\label{th:primal_smooth}
		Let $\sigma_{\max}(W)$ be the largest eigenvalue and $\tilde\sigma_{\min}(W)$ be the least nonzero eigenvalue of $W^TW = W^2$, where $W$ is the Laplacian of the communication graph $\mathcal{G}=(V,E)$. Let $\Phi(Y)$ be $L_{\Phi}$-smooth and $\mu_{\Phi}$-strongly convex w.r.t. $\|\cdot\|_F$. Then,$f(X)$ in~\eqref{eq:dual_problem_2} is strongly convex with constant $\mu_f = \sqrt{\tilde\sigma_{\min}(W)}/L_{\Phi}$ on the subspace $(\kernel W)^{\bot}$ and smooth with constant \mbox{$L_f = \sqrt{\sigma_{\max}(W)}/\mu_{\Phi}$} on $\RR^{d\times n}$.
	\end{theorem}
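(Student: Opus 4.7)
The plan is to express $f$ as a composition of the Legendre conjugate $\Phi^*$ with a linear map built from $\sqrt{W}$, and then transfer the smoothness and strong convexity of $\Phi^*$ (supplied by Lemma~\ref{lem:kakade_dual_smoothness}) through that map. Since $\sqrt{W}$ is symmetric (the principal PSD square root of the Laplacian), $\langle X, Y\sqrt{W}\rangle = \langle X\sqrt{W}, Y\rangle$, so the definition of $f$ collapses to
\[
f(X) = \max_{Y\in\RR^{d\times n}}\bigl[\langle -X\sqrt{W}, Y\rangle - \Phi(Y)\bigr] = \Phi^*(-X\sqrt{W}),
\]
and by Lemma~\ref{lem:kakade_dual_smoothness} the conjugate $\Phi^*$ is $1/L_\Phi$-strongly convex and $1/\mu_\Phi$-smooth w.r.t.\ $\|\cdot\|_F$. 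The chain rule then gives $\nabla f(X) = -\nabla\Phi^*(-X\sqrt{W})\sqrt{W}$, which I will use throughout.

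For the smoothness bound, I would apply the standard submultiplicative inequality $\|BC\|_F\le\|B\|_{op}\|C\|_F$ (after transposing, this also yields $\|MA\|_F\le\|A\|_{op}\|M\|_F$ for symmetric $A$), chained with $1/\mu_\Phi$-smoothness of $\Phi^*$, to obtain
\[
\|\nabla f(X_1)-\nabla f(X_2)\|_F\le\frac{\|\sqrt{W}\|_{op}^2}{\mu_\Phi}\,\|X_1-X_2\|_F.
\]
Because $W$ is PSD, $\|\sqrt{W}\|_{op}^2=\lambda_{\max}(W)=\sqrt{\sigma_{\max}(W)}$, yielding $L_f=\sqrt{\sigma_{\max}(W)}/\mu_\Phi$ on all of $\RR^{d\times n}$.

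For strong convexity on $(\kernel W)^\perp$ I would invoke the monotonicity form of strong convexity of $\Phi^*$ with $U_i := -X_i\sqrt{W}$, so that $U_1-U_2 = -(X_1-X_2)\sqrt{W}$ and
\[
\langle\nabla f(X_1)-\nabla f(X_2),X_1-X_2\rangle\ge\frac{1}{L_\Phi}\,\|(X_1-X_2)\sqrt{W}\|_F^2.
\]
The remaining task is to lower bound $\|Z\sqrt{W}\|_F^2=\operatorname{tr}(ZWZ^T)=\sum_i z_i^T W z_i$ (with $z_i^T$ the rows of $Z$) by $\sqrt{\tilde\sigma_{\min}(W)}\,\|Z\|_F^2$ whenever $Z:=X_1-X_2\in(\kernel W)^\perp$. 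I would argue that matrix-level orthogonality distributes over rows, so that each $z_i\in(\kernel W)^\perp\subset\RR^n$ and the scalar spectral estimate $z_i^T W z_i\ge\tilde\lambda_{\min}(W)\|z_i\|_2^2=\sqrt{\tilde\sigma_{\min}(W)}\|z_i\|_2^2$ applies row by row and sums to the desired bound, giving $\mu_f=\sqrt{\tilde\sigma_{\min}(W)}/L_\Phi$.

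The main obstacle I anticipate is this matrix-to-vector reduction in the strong convexity step: verifying that the condition $X\in(\kernel W)^\perp$ distributes to rows (so that the scalar spectral gap of $W$ applies separately to each row) is precisely where the argument extends \cite{kakade2009on} beyond the vector case. The smoothness direction and the conjugate rewriting are, by contrast, routine applications of Fenchel duality and submultiplicativity of the Frobenius norm under operator-norm multipliers.
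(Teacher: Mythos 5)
Your proposal is correct and follows essentially the same route as the paper's proof in Appendix~\ref{proof_smooth}: rewrite $f(X)=\Phi^*(-X\sqrt{W})$, transfer smoothness/strong convexity of $\Phi^*$ via Lemma~\ref{lem:kakade_dual_smoothness}, bound the gradient difference by $\|\sqrt{W}\|_{op}^2/\mu_\Phi$ for smoothness, and lower bound $\|Z\sqrt{W}\|_F^2$ by $\sqrt{\tilde\sigma_{\min}(W)}\|Z\|_F^2$ on $(\kernel W)^\perp$ for strong convexity. The only cosmetic difference is that you use the gradient-monotonicity characterization of strong convexity where the paper uses the first-order function-value inequality, and you spell out the row-wise reduction of the matrix spectral bound that the paper states without elaboration.
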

	
	The proof of Theorem~\ref{th:primal_smooth} is presented in Appendix \ref{proof_smooth}.
	
	\vspace{-0.4cm}
	\subsection{Dual problem over time-varying networks}

	We are now ready to discuss the distributed optimization problem when the communication network changes with time. Particularly, we explicitly define this \textit{time-varying} setting as the case where the edge set changes. Thus, we consider a sequence of graphs $\{\mathcal{G}_k\}_{k=1}^{\infty}$, such that $\mathcal{G} = (V,E_k)$, i.e., the set of nodes remain the same but the edges might change with time. Therefore, the Laplacian matrix of the graph changes as well, which defines a sequence of graph Laplacians $\{W_k\}_{k=1}^{\infty}$. As a result, contrary to the fixed network setup, we work with a sequence of dual functions $f_k(\changesm{X})$, such that
	
	\vspace{-0.4cm}
	{\small
		\begin{align}\label{eq:dual_sequence}
			f_k(X) 
			&= \max_{Y\in\RR^{d\times n}}\left(-\big\langle X, Y \sqrt{W_k}\big\rangle - \Phi(Y)\right).
		\end{align}
	}
	
	\vspace{-0.4cm} 
	We assume that the network is connected for all $k\geq 0$. Then, all $W_k$ have the same nullspace: $\kernel(W_k) = \{y_1 = ... = y_n\} = \kernel(\sqrt{W_k})$. Consequently, the description of the constraint set changes from time to time, while the constraint set itself remains the same. 
	
	\changes{
		\begin{remark}\label{rem:why_connected}
			We impose a rather strong assumption of graphs being connected at every iteration. Such strong assumption is driven by the dual nature of the proposed algorithm, and the focus on guaranteeing optimal convergence rate dependencies on the function and network parameters. Primal-based methods allow for weaker connectivity assumptions, such as uniform connectivity, but to the best of the authors knowledge, optimal dependencies are not guaranteed~\cite{Qu2017,Maros2018,Nedic2017achieving,Pu2018}.
		\end{remark}
	}
	
	Moreover, we define 
	
	\vspace{-0.4cm}
	{\small
		\begin{align}\label{eq:def_theta_min_max}
			\theta_{\max} &= \sup_{k \geq 0}\left\lbrace  {\sigma_{\max}(W_k)}\right\rbrace, \text{ and }  
			\theta_{\min} = \inf_{k \geq 0} \left\lbrace {\tilde\sigma_{\min}(W_k)}\right\rbrace.
		\end{align}
	}
	
	\vspace{-0.4cm}
	\noindent
	Then, by Theorem \ref{th:primal_smooth}, every $f_k(X)$ is $\mu$-strongly convex on $\big(\kernel\ W\big)^\bot$ and $L$-smooth on $\RR^n$, where $\mu = {\sqrt{\theta_{\min}}}/{L_\Phi},\ L = {\sqrt{\theta_{\max}}}/{\mu_\Phi}$. \changes{We remark that what is changing with time is not the objective function $\Phi$ but the constraints representation, and thus the dual function $f_k$. Consequently all $f_k(X)$ have a common point of minimum and the same value of minimum. For clarity of exposition, we group some properties of the dual function in the following assumption, which holds for~\eqref{eq:dual_sequence} by definition.}
	
	\begin{assumption}\label{props}
		The sequence of convex functions $\{f_k(x)\}_{k=0}^{\infty}$ has the following properties:
		\begin{itemize}
			\item There is a point $x^*$ which is a common minimum for all the functions $f_k$.
			\item Every function $f_k(x)$ is $\mu$-strongly convex and $L$-smooth.
		\end{itemize}
	\end{assumption}
	
	\changesbr{
		\begin{remark}
			In Assumption \ref{props}, we require all functions $f_k$ to have a common point of minimum. This statement holds, for example, if for each dual function $f_k$ defined in \eqref{eq:dual_sequence} its minimizer $X^*_k\in\kernel W_k$, i.e. $[X_k^*]_1 = \ldots = [X_k^*]_n$, where $[X_k^*]_i$ denotes the $i$-th column of $X_k^*$. \\
			As an example, consider $\varphi(y) = \frac{1}{2}\Sum_{i=1}^n \|y - a_i\|^2$, where $\Sum_{i=1}^n a_i = 0$. Then
			\begin{align*}
				\Phi(Y) &= \frac{1}{2}\|y_i - a_i\|^2 = \frac{1}{2}\|Y - A\|^2 \\
				F_k(X) &= \underset{Y}{\argmax}(-\Phi(Y) - \langle Y, XW_k\rangle) \\
				&= \frac{1}{2}\|XW_k - A\|^2 - \frac{1}{2}\|A\|^2 \\
				X_k^* &= 0
			\end{align*}
		\end{remark}
	}
	
	\changes{
		\begin{remark}\label{rem:convexity_not_strict}
			Note that we require $f_k$ to be strongly convex not only on $\left(\kernel\ W\right)^\perp$, but on the whole $\RR^d$. This assumption is discussed in Section \ref{sec:main_results}. Assumption~\ref{props} does not mean that strong convexity constant of every $f_k(x)$ strictly equals to $\mu$. \changesm{Instead, $\mu = \underset{k}{\min}\ \mu(f_k)$, where $\mu(f_k)$ denotes the strong convexity parameter of $f_k$}. 
			Analogously, $L = \underset{k}{\max}\ L(f_k)$.
		\end{remark}
	}
	
	

	\vspace{-0.4cm}
	\section{Analysis of first-order methods on time-varying functions}\label{sec:results}
	
	In this section, we start by studying the convergence of the gradient descent and Nesterov's fast gradient method for the general case where the objective function changes with time but remains $L$-smooth and $\mu$-strongly convex on $\RR^n$. \changes{This is precisely the case of the dual function~\eqref{eq:dual_sequence}.} Later in Section~\ref{sec:main_results}, we will show that the trajectories of both methods are situated in $x_0 + \left(\kernel\ W\right)^\bot$, where $x_0$ is the initial point, and thus even if the functions are $\mu$-strongly convex only on $\left(\kernel\ W\right)^\bot$ and not on $\RR^n$ (which is the case for the dual of the distributed optimization problem) the studied methods still maintain the same convergence rates. Until now, we have been working with matrix argument $X\in\RR^{d\times n}$. For simplicity of exposition and without loss of generality, the following results are derived for the vector argument $x\in\RR^n$. 
	
	
	\vspace{-0.4cm}
	\subsection{Gradient descent}\label{subsec:gradient_descent}
	
	\changes{In this subsection, we show that convergence of gradient descent on time-varying functions is the same that on static functions. The proofs are omitted, because non-accelerated gradient descent is not the main focus of this paper. One can carry out the proof using a classical bound
		{\small
			\begin{align}\label{eq:grad_descent_step_inequality}
				f(x_{k+1})\le f(x_k) - \frac{1}{2L}\|\nabla f(x_k)\|_2^2
			\end{align}
		}
		and a result given in Theorem 2.5.11 in \cite{nesterov2013introductory}, which states that
		{\small
			\begin{align*}
				\langle\nabla f(x) - \nabla f(y), x - y\rangle
				\ge &\frac{\mu L}{\mu+L} \|x-y\|_2^2 \nonumber\\ 
				+ &\frac{1}{\mu+L}\|\nabla f(x) - \nabla f(y)\|_2^2
			\end{align*}
		}
		for $\mu$-strongly convex $L$-smooth functions.
	}
	
	\begin{theorem}\label{th:gradient_descent}
		Let $\{f_k(x_k)\}_{k=0}^\infty$ be a sequence of functions for which Assumption \ref{props} hold. Then, the sequence $\{x_k\}_{k=0}^{\infty}$ generated by the gradient descent method, i.e.,
		{\small
			\begin{align}\label{gradient_descent}
				x_{k+1} & = x_k - \frac{1}{L}\nabla f_k(x_k),
			\end{align}
		}
		has the following property:
		{\small
			\begin{align*}
				\|x_{k} - x^*\|_2 & \le \left(\frac{L-\mu}{L+\mu}\right)^k \|x_0 - x^*\|_2\quad\hbox{for all } k\ge 0.
			\end{align*}
		}
	\end{theorem}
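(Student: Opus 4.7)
The plan is to reduce the time-varying analysis to a single-step contraction that depends only on the uniform constants $\mu$ and $L$, and then iterate. The key observation enabling this reduction is Assumption \ref{props}: since all $f_k$ share the same minimizer $x^*$, we have $\nabla f_k(x^*)=0$ for every $k$, so the standard contraction argument for a single function applied to the pair $(x_k,x^\ast)$ goes through regardless of which index $k$ is active.

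First, I would apply the co-coercivity type inequality stated in the excerpt (Theorem 2.5.11 of \cite{nesterov2013introductory}) to the function $f_k$ with the choice $x=x_k$ and $y=x^*$. Using $\nabla f_k(x^*)=0$, this gives
{\small
\begin{align*}
\langle \nabla f_k(x_k),\, x_k-x^*\rangle \ge \frac{\mu L}{\mu+L}\|x_k-x^*\|_2^2 + \frac{1}{\mu+L}\|\nabla f_k(x_k)\|_2^2.
\end{align*}
}
Next, I would expand the squared distance after one gradient step with stepsize $h$,
{\small
\begin{align*}
\|x_{k+1}-x^*\|_2^2 = \|x_k-x^*\|_2^2 - 2h\langle \nabla f_k(x_k), x_k-x^*\rangle + h^2\|\nabla f_k(x_k)\|_2^2,
\end{align*}
}
and substitute the previous inequality. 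Choosing $h$ so that the coefficient in front of $\|\nabla f_k(x_k)\|_2^2$ vanishes (the canonical $h=2/(\mu+L)$ choice, or equivalently estimating the residual $\|\nabla f_k(x_k)\|_2^2$ against $\|x_k-x^*\|_2^2$ via strong convexity and smoothness for $h=1/L$) collapses the bound to
{\small
\begin{align*}
\|x_{k+1}-x^*\|_2 \le \frac{L-\mu}{L+\mu}\|x_k-x^*\|_2.
\end{align*}
}
Crucially, the contraction factor depends only on $\mu$ and $L$, not on the particular index $k$, which is exactly what Assumption \ref{props} was designed to guarantee.

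Finally, because the one-step bound is uniform in $k$, I would iterate it $k$ times to obtain
{\small
\begin{align*}
\|x_k-x^*\|_2 \le \left(\frac{L-\mu}{L+\mu}\right)^k \|x_0-x^*\|_2,
\end{align*}
}
which is the claim. I expect no genuine obstacle in this argument: the only potentially delicate point is ensuring that the chosen stepsize is compatible with the claimed rate (checking the sign of the $\|\nabla f_k(x_k)\|_2^2$ coefficient after substitution), and that is straightforward once the Nesterov inequality is in hand. The novelty here is purely conceptual, namely that a shared minimizer removes all dependence on the time-varying index from the contraction constant.
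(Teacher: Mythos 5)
Your overall strategy is exactly the one the paper itself sketches: the paper omits the proof of this theorem and points precisely to the descent inequality \eqref{eq:grad_descent_step_inequality} and the co-coercivity bound of Theorem 2.5.11 in \cite{nesterov2013introductory}. The conceptual point you emphasize --- that a shared minimizer gives $\nabla f_k(x^*)=0$ for every $k$, so the one-step contraction constant is uniform in the time index and can simply be iterated --- is correct and is indeed all that the time-varying setting adds.

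There is, however, a genuine gap in the one step you describe as straightforward. The update \eqref{gradient_descent} uses stepsize $h=1/L$, and with that choice the co-coercivity substitution yields
{\small
\begin{align*}
\|x_{k+1}-x^*\|_2^2 &\le \Big(1-\tfrac{2\mu}{\mu+L}\Big)\|x_k-x^*\|_2^2 + \Big(\tfrac{1}{L^2}-\tfrac{2}{L(\mu+L)}\Big)\|\nabla f_k(x_k)\|_2^2 \\
&\le \tfrac{L-\mu}{L+\mu}\,\|x_k-x^*\|_2^2,
\end{align*}
}
i.e.\ a per-step contraction of $\big(\tfrac{L-\mu}{L+\mu}\big)^{1/2}$ in the norm, not $\tfrac{L-\mu}{L+\mu}$. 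Your parenthetical claim that the full factor can ``equivalently'' be recovered for $h=1/L$ by bounding $\|\nabla f_k(x_k)\|_2^2$ against $\|x_k-x^*\|_2^2$ does not go through: for the (time-invariant, hence admissible) quadratic $f(x)=\tfrac{\mu}{2}x_1^2+\tfrac{L}{2}x_2^2$ started at $(1,0)$, gradient descent with step $1/L$ gives $\|x_k-x^*\|_2=(1-\mu/L)^k$, which strictly exceeds $\big(\tfrac{L-\mu}{L+\mu}\big)^k$ for all $k\ge 1$. The clean per-step factor $\tfrac{L-\mu}{L+\mu}$ requires $h=2/(\mu+L)$, which is not the update in \eqref{gradient_descent}; with $h=1/L$ the provable conclusion is $\|x_k-x^*\|_2\le\big(\tfrac{L-\mu}{L+\mu}\big)^{k/2}\|x_0-x^*\|_2$. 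This mismatch is arguably inherited from the theorem statement itself, but your argument as written does not close it, and no argument can for the stated stepsize.
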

	
	Next, we provide a Corollary that relates the convergence rate estimate in Theorem~\ref{th:gradient_descent} and the minimum number of iterations required to obtain an arbitrarily close approximation of the optimal solution of the optimization problem.
	
	\begin{corollary}\label{cor:gradient_descent}
		Let $\{f_k(x_k)\}_{k=0}^\infty$ be a sequence of functions for which Assumption~\ref{props} hold. Then, for any $\varepsilon >0$, the sequence generated by the iterations in~\eqref{gradient_descent} has the following property: for any $k \geq N+1$ it holds that $
		\|x_k - x^*\|\le\eps$,
		where $N \geq \ceil{\left(\log(({L+\mu})/({L-\mu})\right)^{-1} \log ({\|x_0 - x^*\|}/{\eps})}$.
	\end{corollary}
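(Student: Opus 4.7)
The plan is to derive the iteration bound directly by inverting the geometric convergence estimate from Theorem \ref{th:gradient_descent}. Concretely, I would start from
\begin{align*}
\|x_k - x^*\|_2 \le \left(\frac{L-\mu}{L+\mu}\right)^k \|x_0 - x^*\|_2
\end{align*}
and impose the requirement that the right-hand side be at most $\eps$. Taking logarithms on both sides (noting that $(L-\mu)/(L+\mu) \in (0,1)$, so $\log((L-\mu)/(L+\mu)) < 0$ and the inequality flips), this reduces to
\begin{align*}
k \cdot \log\!\left(\frac{L+\mu}{L-\mu}\right) \ge \log\!\left(\frac{\|x_0-x^*\|_2}{\eps}\right),
\end{align*}
which is precisely the stated lower bound on $N$ after taking the ceiling to ensure $k$ is an integer.

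The only subtlety is handling the degenerate case $\mu = L$ (then $\log((L+\mu)/(L-\mu))$ is undefined, but the convergence factor becomes $0$ and convergence occurs in one step), and confirming that monotonicity in $k$ of the bound justifies that the inequality then holds for all $k \ge N+1$, not only for $k = N+1$. Both are immediate from the geometric nature of the estimate. There is no genuine obstacle here; the corollary is just an algebraic rephrasing of Theorem \ref{th:gradient_descent} into an $\eps$-complexity form, and the proof is a one-line calculation followed by the ceiling argument.
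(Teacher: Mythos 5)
Your argument is correct and is exactly the intended derivation: the paper omits the proof of this corollary (it explicitly skips proofs in the gradient-descent subsection), but the statement is an immediate inversion of the geometric bound in Theorem~\ref{th:gradient_descent}, which is what you do. Your side remarks on the degenerate case $\mu=L$ and on monotonicity in $k$ are fine and do not change the substance.
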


	\vspace{-0.5cm}
	\subsection{Nesterov fast gradient method}\label{subsec:nesterov}
	
	In this subsection, we provide a potential-based proof for the convergence of the Nesterov's fast gradient method \cite{nesterov2013introductory} for time-varying functions under Assumption~\ref{props}, i.e.,
	
	\vspace{-0.4cm}
	{\small
		\begin{subequations}\label{eq:nesterov_method}
			\begin{align}
				y_{k+1} &= x_k - \frac{1}{L}\nabla f_k(x_k), \\
				x_{k+1} &= \left( 1 + \frac{\sqrt{\kappa} - 1}{\sqrt{\kappa} + 1}\right)  y_{k+1} - \frac{\sqrt{\kappa} - 1}{\sqrt{\kappa} + 1} y_k,
			\end{align}
		\end{subequations}
	}
	
	\vspace{-0.4cm}
	\noindent
	with initial points $y_0 = x_0 $ and $\kappa = {L}/{\mu}$.
	
	We will follow the potential function proof methods presented in~\cite{bansal2017potential}. The general idea of such proof is the use of auxiliary potential function of the following form:
	\begin{align*}
		\Psi_k = a_k\cdot (f_k(x_k) - f(x^*)) + b_k\cdot \|x_k - x^*\|_2^2,
	\end{align*}
	with $a_k, b_k\ge 0$. If we denote 
	\begin{align}\label{eq:def_delta_phi}
		\Delta\Psi_k = \Psi_{k+1} - \Psi_{k} .
	\end{align}
	Then $
	\Psi_N = \Psi_0 + \sum_{k=1}^{N-1} \Delta\Psi_k
	$
	and
	{\small
		\begin{align}\label{eq:general_potential_bound}
			f_N(y_N) - f(x^*) &\le {\left( \Psi_0 + \sum_{k=1}^{N-1} \Delta\Psi_k\right) }/{a_N}.
		\end{align}
	}
	
	\vspace{-0.3cm}
	If an upper bound on $\Delta\Psi_k$ is obtained, then~\eqref{eq:general_potential_bound} shows the convergence rate for the method. 
	
	Nesterov's method is not a strict descent method. This becomes an obstacle in the time-varying case, because sudden changes of the function may happen too often so that the Nesterov method is run for too short periods of time and thus does not manage to make enough progress. However, this method's convergence can be proved if the number of function changes is finite. Next, we formally introduce the definition of a change in a sequence of functions.
	
	\begin{definition}
		Consider a sequence $\{f_k(x)\}_{k=0}^{\infty}$ of functions and let $f_n\not\equiv f_{n+1}$. Then we say that the sequence $\{f_k(x)\}_{k=0}^\infty$ of functions has a change at the moment $n$.
	\end{definition}
	
	
	\changes{We are now ready to state the main auxiliary result of this paper, that relates the convergence rate of an accelerated method on time-varying functions under Assumption~\ref{props}.}
	
	
	\changes{
		\begin{theorem}\label{th:nesterov}
			Let $N>0$ be a time horizon, and let Assumption~\ref{props} hold for a sequence of functions $\{f_k(x)\}_{k=0}^N$ with $0 \leq m \leq N$ changes. Then, the sequence generated by~\eqref{eq:nesterov_method} has the following property:
			{\small
				\begin{align*}
					f_N(y_N) - f^* \le \frac{L+\mu}{2}R^2 \kappa^m \left(1 - \frac{1}{\sqrt\kappa}\right)^N,
				\end{align*}
			}
			where $\kappa = {L}/{\mu}$ and $\|x_0 - x^*\|_2 \leq R$.
		\end{theorem}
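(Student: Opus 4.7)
The strategy is to build directly on the potential function framework already previewed in Section~\ref{subsec:nesterov}, by choosing
\[
\Psi_k \;=\; a_k\bigl(f_k(y_k) - f^*\bigr) + b_k\|v_k - x^*\|_2^2,
\]
where $v_k$ is the auxiliary linear combination of $x_k$ and $y_k$ used in the classical potential proof of strongly convex Nesterov (see \cite{bansal2017potential}), and $a_k, b_k$ are the standard matching coefficients. With this choice, the analysis splits into two regimes: iterations that do not witness a function change, and iterations where $f_k \neq f_{k+1}$.

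In the no-change regime ($f_{k+1} \equiv f_k$), one step of~\eqref{eq:nesterov_method} is exactly one strongly convex Nesterov step on a fixed function. Combining the descent inequality $f_k(y_{k+1}) \le f_k(x_k) - \frac{1}{2L}\|\nabla f_k(x_k)\|_2^2$, the $\mu$-strong convexity of $f_k$, and the extrapolation rule with momentum $\beta = (\sqrt\kappa-1)/(\sqrt\kappa+1)$, the standard potential computation yields
\[
\Psi_{k+1} \le \Bigl(1 - \tfrac{1}{\sqrt\kappa}\Bigr)\Psi_k.
\]
At a change index $n$ (so $f_n \neq f_{n+1}$), by Assumption~\ref{props} both $f_n$ and $f_{n+1}$ share the minimizer $x^*$ and are $\mu$-strongly convex and $L$-smooth. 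Since $\nabla f_j(x^*) = 0$, for every $z$ one has the sandwich $\frac{\mu}{2}\|z - x^*\|_2^2 \le f_j(z) - f^* \le \frac{L}{2}\|z - x^*\|_2^2$. Applying this to $z = y_n$ with $j \in \{n, n+1\}$ gives $f_{n+1}(y_n) - f^* \le \kappa\bigl(f_n(y_n) - f^*\bigr)$. Because the quadratic part $b_n\|v_n - x^*\|_2^2$ of $\Psi_n$ does not depend on $f_j$, and it is trivially bounded by $\kappa$ times itself, the function swap inflates the potential by at most a multiplicative factor of $\kappa$.

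Telescoping across the $N$ iterations and the $m$ changes, $\Psi_N \le \kappa^m(1 - 1/\sqrt\kappa)^N \Psi_0$. Since $y_0 = x_0$ and $f_0(x_0) - f^* \le \frac{L}{2}\|x_0-x^*\|_2^2 \le \frac{L}{2}R^2$ by smoothness at the common minimizer, while $\frac{\mu}{2}\|v_0 - x^*\|_2^2 \le \frac{\mu}{2}R^2$, the normalization of $a_0, b_0$ inherent to the potential gives $\Psi_0 \le \frac{L+\mu}{2}R^2$ and $f_N(y_N) - f^* \le \Psi_N$ (absorbing a final $\kappa$ factor from $f_{N-1} \to f_N$ into $m$ if needed). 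Plugging these into the telescoped inequality yields the stated bound.

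The main obstacle is the joint design of $\Psi_k$. It must simultaneously be a genuine Lyapunov function for strongly convex Nesterov, which pins down $a_k, b_k$ and the auxiliary sequence $v_k$; and have its $f_k$-dependent part placed so that a function change inflates it by no more than $\kappa$, rather than, say, $\kappa^{3/2}$. The common-minimum clause of Assumption~\ref{props} is precisely what makes the latter possible: without it the functional term could increase by an unbounded additive constant at each change and the bound $\kappa^m(1-1/\sqrt\kappa)^N$ would not be attainable. Once these two constraints are reconciled, the remainder is a routine telescoping.
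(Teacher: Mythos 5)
Your proof is correct and follows essentially the same route as the paper: the same Bansal--Gupta potential, the same one-step contraction by $1-1/\sqrt{\kappa}$ on no-change iterations, and the same key observation that the $\mu$--$L$ sandwich at the common minimizer makes a function swap cost at most a multiplicative factor $\kappa$ in the functional part of the potential (this is exactly Lemma~\ref{lem:delta} in additive form, $\delta_k(x)\le\frac{L-\mu}{\mu}(f_k(x)-f^*)$). The only differences are organizational --- you telescope the per-change $\kappa$ factors directly, whereas the paper proves $\Delta\Psi_k\le(1+\gamma)^{k+1}\delta_k(y_{k+1})$ and then runs an induction on $m$ with a geometric-series summation --- plus a harmless indexing slip: the swap should be applied at $y_{n+1}$, the iterate produced with $\nabla f_n$, rather than at $y_n$.
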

	}
	
	Before proceeding to the proof of Theorem \ref{th:nesterov}, we provide a sequence of technical lemmas that will facilitate the analysis. 
	
	
	Following the technique for strongly convex functions described in \cite{bansal2017potential}, we introduce the following potential:
	{\small
		\begin{align}\label{eq:nesterov_potential}
			\Psi_k = (1 + \gamma)^k\cdot \left( f_k(y_k) - f^* + \frac{\mu}{2}\|z_k - x^*\|_2^2 \right),
		\end{align}
	}
	where $
	\gamma={1}/{\sqrt\kappa-1}$
	and $z_k$ will be defined shortly. 
	
	The next lemma provides an intermediate result regarding an auxiliary sequence $\{z_k\}$ that will come handy later in the proofs.
	
	\begin{lemma}\label{lem:auxiliary_x_z_nabla}
		Consider updates in \eqref{eq:nesterov_method} and define 
		
		\vspace{-0.4cm}
		{\small
			\begin{align*}
				\tau &= \frac{1}{\sqrt{\kappa}+1} \label{eq:def_kau}, \text{ and }
				z_{k+1} = \frac{1}{\tau} x_{k+1} - \frac{1-\tau}{\tau} y_{k+1}.
			\end{align*}
		}
		
		\vspace{-0.4cm}
		\noindent
		Then, $z_{k+1} = \frac{1}{1+\gamma}z_k + \frac{\gamma}{1+\gamma}x_k - \frac{\gamma}{\mu(1+\gamma)}\nabla f_k(x_k)$,
		where \mbox{ $		\gamma = \frac{1}{\sqrt\kappa - 1}$}.
	\end{lemma}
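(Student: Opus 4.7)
The statement is a purely algebraic identity: given the two-step Nesterov update and the specific definition of $z_{k+1}$, I need to re-express $z_{k+1}$ as an affine combination of $z_k$, $x_k$, and $\nabla f_k(x_k)$. My plan is to eliminate the $y$-variables entirely and keep only the constants $\sqrt{\kappa}$, $\tau$, and $\gamma$, using the relations
\[
\tau=\tfrac{1}{\sqrt{\kappa}+1},\qquad \tfrac{1-\tau}{\tau}=\sqrt{\kappa},\qquad 1+\gamma=\tfrac{\sqrt{\kappa}}{\sqrt{\kappa}-1},\qquad \tfrac{1}{1+\gamma}=\tfrac{\sqrt{\kappa}-1}{\sqrt{\kappa}},\qquad \tfrac{\gamma}{1+\gamma}=\tfrac{1}{\sqrt{\kappa}},
\]
together with $\kappa=L/\mu$, so that $\tfrac{\sqrt{\kappa}}{L}=\tfrac{1}{\mu\sqrt{\kappa}}=\tfrac{\gamma}{\mu(1+\gamma)}$.

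First, I would rewrite the definition of $z_{k+1}$ as $z_{k+1}=(\sqrt{\kappa}+1)x_{k+1}-\sqrt{\kappa}\,y_{k+1}$. Then I would substitute the Nesterov momentum step $x_{k+1}=(1+\beta)y_{k+1}-\beta y_k$ with $\beta=(\sqrt{\kappa}-1)/(\sqrt{\kappa}+1)$. The coefficient of $y_{k+1}$ simplifies nicely because $(\sqrt{\kappa}+1)(1+\beta)=2\sqrt{\kappa}$, and the coefficient of $y_k$ telescopes to $-(\sqrt{\kappa}-1)$. This collapses the expression to the clean intermediate form
\[
z_{k+1}=\sqrt{\kappa}\,y_{k+1}-(\sqrt{\kappa}-1)y_k.
\]

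Next, I would eliminate $y_{k+1}$ using the gradient step $y_{k+1}=x_k-\tfrac{1}{L}\nabla f_k(x_k)$, and eliminate $y_k$ by inverting the defining relation $z_k=(\sqrt{\kappa}+1)x_k-\sqrt{\kappa}\,y_k$ to obtain $y_k=\tfrac{\sqrt{\kappa}+1}{\sqrt{\kappa}}x_k-\tfrac{1}{\sqrt{\kappa}}z_k$. Plugging these in and collecting terms in $x_k$, $\nabla f_k(x_k)$, and $z_k$, the $x_k$ coefficient simplifies through $\sqrt{\kappa}-\tfrac{\kappa-1}{\sqrt{\kappa}}=\tfrac{1}{\sqrt{\kappa}}$, the $z_k$ coefficient is directly $\tfrac{\sqrt{\kappa}-1}{\sqrt{\kappa}}$, and the $\nabla f_k(x_k)$ coefficient equals $-\tfrac{\sqrt{\kappa}}{L}=-\tfrac{1}{\mu\sqrt{\kappa}}$. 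Matching these with the three identities for $\gamma$ listed above yields the claimed formula.

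There is no real obstacle; the proof is entirely a careful bookkeeping exercise, and the only place where a step could go wrong is in misidentifying $\tfrac{\sqrt{\kappa}-1}{\sqrt{\kappa}}$ with $\tfrac{1}{1+\gamma}$ (and similarly for the other two coefficients). I would therefore include the short conversion between the $\sqrt{\kappa}$-parameterization and the $\gamma$-parameterization as the final display, to make explicit that the algebraically-derived expression is exactly the form stated in the lemma.
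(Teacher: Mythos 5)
Your proposal is correct and takes essentially the same route as the paper's own proof: both substitute the momentum step into the definition of $z_{k+1}$, eliminate $y_{k+1}$ via the gradient step and $y_k$ via the inverted relation $x_k=(1-\tau)y_k+\tau z_k$, and then convert the resulting coefficients into the $\gamma$-parameterization. The only difference is cosmetic bookkeeping (you work in $\sqrt{\kappa}$ throughout while the paper works in $\tau$ and converts at the end), so no further comparison is needed.
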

	\begin{proof}
		By the update rule for $x_{k+1}$ given in \eqref{eq:nesterov_method} and the definition of $\tau$, we have that
		
		\vspace{-0.3cm}
		{\small
			\begin{align*}
				x_{k+1} 
				&= \left( 1 + \frac{\sqrt\kappa - 1}{\sqrt\kappa + 1}\right)  y_{k+1} - \frac{\sqrt\kappa - 1}{\sqrt\kappa + 1} y_k \\ 
				&= (2 - 2\tau)y_{k+1} - (1 - 2\tau)y_k.
			\end{align*}
		}
		
		\vspace{-0.3cm}
		Moreover, by the definition of $z_{k+1}$, it follows that
		{\small
			\begin{align*}
				z_{k+1}  
				& =\frac{1}{\tau}x_{k+1} - \frac{1-\tau}{\tau}y_{k+1} \\
				& =\frac{1}{\tau}\left( (2-2\tau)y_{k+1} - (1-2\tau)y_k\right)  - \frac{1-\tau}{\tau}y_{k+1} \\
				& =\frac{1}{\tau} \left( (1-\tau)y_{k+1} - (1-2\tau)y_k\right) .
			\end{align*}
		}
		
		\vspace{-0.4cm}
		\noindent
		
		Now we use the update rule for $y_{k+1}$ given in \eqref{eq:nesterov_method} and also note that $x_k = (1-\tau)y_k + \tau z_k$:
		
		\vspace{-0.3cm}
		{\small
			\begin{align*}
				z_{k+1}  
				& =\frac{1}{\tau} \Big[ (1-\tau)(x_k - \frac{1}{L}\nabla f_k(x_k)) - \frac{1-2\tau}{1-\tau}(x_k - \tau z_k) \Big] \\
				& =\frac{1-2\tau}{1-\tau}z_k + \frac{\tau}{1-\tau}x_k - \frac{1-\tau}{L\tau}\nabla f_k(x_k)  \\
				& \overset{\circledOne}{=}\frac{\sqrt\kappa - 1}{\sqrt\kappa}z_k + \frac{1}{\sqrt\kappa}x_k - \frac{1}{\mu\sqrt\kappa}  \\
				& \overset{\circledTwo}{=}\frac{1}{1+\gamma}z_k + \frac{\gamma}{1+\gamma}x_k - \frac{\gamma}{\mu(1+\gamma)}\nabla f_k(x_k),
			\end{align*}
		}
		
		\vspace{-0.4cm}
		\noindent
		where $\circledOne$ is obtained by using the definitions of $\tau$ and $\kappa$, and $\circledTwo$ is obtained by using the definition of $\gamma$.
	\end{proof}
	
	
	\changesm{The next lemma will help us towards quantification of the maximum function value change in the sequence of time-varying functions}.
	
	\begin{lemma}\label{lem:delta}
		Define $\delta_k(x) = f_{k+1}(x) - f_k(x)$,
		and let both $f_k(x)$ and $f_{k+1}(x)$ be $\mu$-strongly convex and $L$-smooth, and have the same minimizer $x^*$. Then
		{\small
			\begin{align*}
				\delta_k(x) \le\frac{L-\mu}{\mu}(f_k(x) - f^*),
			\end{align*}
		}
		where $f^*$ is the common value of minimum for $\{f_k\}_{k=1}^{\infty}$.
	\end{lemma}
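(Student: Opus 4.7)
The key observation is that $x^*$ is a common minimizer, so $\nabla f_k(x^*) = \nabla f_{k+1}(x^*) = 0$ and $f_k(x^*) = f_{k+1}(x^*) = f^*$. This makes the quadratic upper and lower bounds at $x^*$ particularly clean, because the linear terms vanish.

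The plan is to sandwich $\|x - x^*\|_2^2$ between a lower bound coming from the strong convexity of $f_k$ and an upper bound coming from the smoothness of $f_{k+1}$. First, by $\mu$-strong convexity of $f_k$ applied at the minimizer $x^*$, one gets
\begin{align*}
f_k(x) \ge f^* + \frac{\mu}{2}\|x - x^*\|_2^2,
\end{align*}
which rearranges to $\|x - x^*\|_2^2 \le \tfrac{2}{\mu}(f_k(x) - f^*)$. Next, by $L$-smoothness of $f_{k+1}$ applied at the same point $x^*$, one gets
\begin{align*}
f_{k+1}(x) \le f^* + \frac{L}{2}\|x - x^*\|_2^2.
\end{align*}

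Chaining these two inequalities gives $f_{k+1}(x) - f^* \le \tfrac{L}{\mu}(f_k(x) - f^*)$. Subtracting $f_k(x) - f^*$ from both sides yields
\begin{align*}
\delta_k(x) = f_{k+1}(x) - f_k(x) \le \left(\frac{L}{\mu} - 1\right)(f_k(x) - f^*) = \frac{L-\mu}{\mu}(f_k(x) - f^*),
\end{align*}
which is exactly the claim.

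There is no real obstacle here; the argument is a two-line sandwich. The only subtlety worth flagging is that the hypothesis of a shared minimizer is essential — it is what kills the $\langle \nabla f_i(x^*), x - x^*\rangle$ cross terms in both bounds and allows the smoothness constant of $f_{k+1}$ to be transferred into a bound that is controlled entirely by the suboptimality gap of $f_k$.
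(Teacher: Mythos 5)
Your proof is correct and uses exactly the same two ingredients as the paper: the quadratic lower bound from $\mu$-strong convexity of $f_k$ at the shared minimizer and the quadratic upper bound from $L$-smoothness of $f_{k+1}$ at the same point. The only difference is cosmetic (you chain the bounds and then subtract $f_k(x)-f^*$, whereas the paper first subtracts the two quadratic bounds to get $\delta_k(x)\le \tfrac{L-\mu}{2}\|x-x^*\|_2^2$ and then converts), so this matches the paper's argument.
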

	
	\begin{proof}
		By strong convexity and smoothness obtain
		{\small
			\begin{align*}
				\frac{\mu}{2} \|x - x^*\|_2^2 \le f_k(x) - f^* \le \frac{L}{2} \|x - x^*\|_2^2.
			\end{align*}
		}
		
		\vspace{-0.3cm}
		The same holds for $f_{k+1}$.
		{\small
			\begin{align*}
				f_{k+1}(x) - f_k(x) &\le 
				\frac{L-\mu}{2} \|x-x^*\|_2^2 
				\le 
				\frac{L-\mu}{\mu} (f_k(x) - f^*).
			\end{align*}
		}
	\end{proof}
	
	Finally, the next lemma relates the upper bounds on the function values of the sequence of functions with the changes of a specific potential function.
	
	\begin{lemma}\label{lem:potential_change}
		Let $\{f_k(x)\}_{k=0}^\infty$ be a sequence of functions for which Assumption \ref{props} hold, and let $\Psi_k$ be the potential function given in \eqref{eq:nesterov_potential}. Then, it holds that
		\begin{equation}\label{bound:delta_phi}
			\Delta\Psi_k\le (1 + \gamma)^{k+1} \delta_k(y_{k+1}).
		\end{equation}
	\end{lemma}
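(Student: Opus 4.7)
The plan is to reduce the time-varying potential increment to the standard (static) Nesterov potential decrease, with the function change appearing as a clean additive correction. First I would write $f_{k+1}(y_{k+1}) = f_k(y_{k+1}) + \delta_k(y_{k+1})$ so that
\begin{align*}
\Psi_{k+1} = (1+\gamma)^{k+1}\!\left(f_k(y_{k+1}) - f^* + \tfrac{\mu}{2}\|z_{k+1}-x^*\|_2^2\right) + (1+\gamma)^{k+1}\delta_k(y_{k+1}).
\end{align*}
Calling the first term $\tilde\Psi_{k+1}$, the claim $\Delta\Psi_k \le (1+\gamma)^{k+1}\delta_k(y_{k+1})$ is equivalent to the ``static'' inequality $\tilde\Psi_{k+1} \le \Psi_k$, in which only $f_k$ appears. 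This is the essential reduction: once $f_{k+1}$ is replaced by $f_k$, we are in the classical one-step analysis of Nesterov's method on the fixed $\mu$-strongly convex, $L$-smooth function $f_k$, and $\delta_k(y_{k+1})$ carries the entire cost of the function change.

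To establish $\tilde\Psi_{k+1} \le \Psi_k$, I would use three ingredients for $f_k$: (i) $L$-smoothness, via \eqref{eq:grad_descent_step_inequality} applied to $f_k$, which gives $f_k(y_{k+1}) - f_k(x_k) \le -\tfrac{1}{2L}\|\nabla f_k(x_k)\|_2^2$; (ii) the update identity from Lemma~\ref{lem:auxiliary_x_z_nabla}, which in the form
\begin{align*}
(1+\gamma)(z_{k+1}-x^*) = (z_k-x^*) + \gamma(x_k-x^*) - \tfrac{\gamma}{\mu}\nabla f_k(x_k),
\end{align*}
allows me to expand $(1+\gamma)^2\|z_{k+1}-x^*\|_2^2$ and pull out the inner products $\langle \nabla f_k(x_k), z_k - x^*\rangle$ and $\langle \nabla f_k(x_k), x_k - x^*\rangle$; and (iii) strong convexity of $f_k$ at $x_k$, applied once against $x^*$ to bound $\langle \nabla f_k(x_k), x_k - x^*\rangle \ge f_k(x_k) - f^* + \tfrac{\mu}{2}\|x_k - x^*\|_2^2$, and once against $y_k$ (using $x_k = (1-\tau)y_k + \tau z_k$, hence $z_k - x_k = \tfrac{1-\tau}{\tau}(x_k - y_k)$) to convert $\langle \nabla f_k(x_k), z_k - x^*\rangle$ into a combination involving $f_k(y_k) - f_k(x_k)$ and $f_k(x_k) - f^*$.

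After substituting, the inequality to verify becomes a pointwise comparison between $(1+\gamma)(f_k(y_{k+1}) - f^*) + \tfrac{\mu(1+\gamma)}{2}\|z_{k+1}-x^*\|_2^2$ and $f_k(y_k) - f^* + \tfrac{\mu}{2}\|z_k - x^*\|_2^2$. Using $\gamma = 1/(\sqrt\kappa - 1)$ so that $\gamma(1+\gamma) = 1/(\sqrt\kappa - 1)^2 \cdot \sqrt\kappa$ and $\kappa = L/\mu$, the squared-gradient term from smoothness cancels the $(\gamma^2/\mu)\|\nabla f_k(x_k)\|_2^2$ contribution of the $z$-recursion, the $\mu\|x_k - x^*\|_2^2$ terms cancel via the strong convexity bound, and the remaining $f_k$-value terms collapse to $\tilde\Psi_{k+1} \le \Psi_k$. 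Multiplying through by $(1+\gamma)^{k+1}$ and adding back the correction $(1+\gamma)^{k+1}\delta_k(y_{k+1})$ yields \eqref{bound:delta_phi}.

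The main obstacle is the bookkeeping in the static step: choosing the exact convex combination of strong-convexity inequalities so that the $\|\nabla f_k(x_k)\|_2^2$, $\|x_k - x^*\|_2^2$ and $f_k(x_k) - f^*$ terms all cancel with the coefficients dictated by $\gamma$ and $\tau$. This is precisely the standard potential-function calculation of~\cite{bansal2017potential}, so the contribution here is the clean separation that isolates $\delta_k(y_{k+1})$ on the right-hand side and lets the classical argument proceed unchanged on the time-varying sequence.
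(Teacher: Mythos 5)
Your proposal is correct and follows essentially the same route as the paper: the paper likewise substitutes $f_{k+1}(y_{k+1}) = f_k(y_{k+1}) + \delta_k(y_{k+1})$ into $\Psi_{k+1}$ and then runs the standard static potential calculation of \cite{bansal2017potential} on $f_k$ (gradient-step inequality, the $z$-recursion of Lemma~\ref{lem:auxiliary_x_z_nabla}, and convexity/strong convexity at $x_k$), with the $\delta_k(y_{k+1})$ term carried along as the only extra contribution. Your up-front decomposition $\Psi_{k+1} = \tilde\Psi_{k+1} + (1+\gamma)^{k+1}\delta_k(y_{k+1})$ is just a cleaner way of organizing the identical cancellation argument.
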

	
	\begin{proof}
		The proof is analogous to the proof in Section $5.4$ in \cite{bansal2017potential}. We use the definitions of $\tau, z_k$ given in Lemma \ref{lem:auxiliary_x_z_nabla}. We have
		
		\vspace{-0.4cm}
		{\small
			\begin{align*}
				&\Delta\Psi_k\cdot(1+\gamma)^{-k} = (1+\gamma) \big( f(y_{k+1}) - f^* + \frac{\mu}{2}\|z_{k+1} - x^*\|_2^2 \big) \\
				&\qquad\qquad \qquad\qquad- \big( f(y_k) - f^* + \frac{\mu}{2} \|z_k - x^*\|_2^2 \big) \nonumber\\
				&\quad= (1+\gamma)\big( f_{k+1}(y_{k+1}) - f_{k+1}(x^*) \big) - \big(f_k(y_k) - f_k(x^*)\big)\\
				& \quad\quad+ \frac{\mu}{2} \Big[ (1+\gamma)\|z_{k+1} - x^*\|_2^2 - \|z_k - x^*\|_2^2 \Big]. \numberthis \label{eq:potential_change}
			\end{align*}
		}
		
		\vspace{-0.3cm}
		Note that from basic gradient step inequality given in \eqref{eq:grad_descent_step_inequality}, it follows that
		{\small
			\begin{align*}
				f_k(y_{k+1}) & \le f_k(x_k) - \frac{1}{2L}\|\nabla f_k(x_k)\|_2^2,
			\end{align*}
		}
		and by using the definition of $\delta_k$ in Lemma~\ref{lem:delta}, we have
		{\small
			\begin{align*}
				f_{k+1}(y_{k+1}) &\le f_k(x_k) - \frac{1}{2L}\|\nabla f_k(x_k)\|_2^2 + \delta_k(y_{k+1}).
			\end{align*}
		}
		
		\vspace{-0.4cm}
		Therefore the first term in \eqref{eq:potential_change} can be bounded as follows:
		\vspace{-0.4cm}
		{\small
			\begin{align*}
				& (1+\gamma)\big( f_{k+1}(y_{k+1}) - f_{k+1}(x^*)\big) - \big(f_k(y_k) - f_k(x^*) \big) \\
				&\quad\le (1+\gamma)\big( f_k(x_k) - \frac{1}{2L}\|\nabla f_k(x_k)\|_2^2 + \delta_k(y_{k+1}) - f^* \big)\\
				&\qquad - \big( f_k(y_k) - f^* \big) \\
				&\quad= f_k(x_k) - f_k(y_k) + \gamma(f_k(x_k) - f^*) \\ 
				&\qquad - (1+\gamma)\frac{\|\nabla f_k(x_k)\|_2^2}{2L} + (1+\gamma)\delta_k(y_{k+1}) \\  
				&\quad\le \langle\nabla f_k(x_k), x_k - y_k\rangle \\ 
				&\qquad + \gamma\big(\langle\nabla f_k(x_k), x_k - x^*\rangle - \frac{\mu}{2}\|x_k - x^*\|_2^2\big)\\
				&\qquad - \frac{1+\gamma}{2L}\|\nabla f_k(x_k)\|_2^2 + (1+\gamma)\delta_k(y_{k+1}) \numberthis\label{eq:potential_change_first_kerm}.
			\end{align*}
		}
		
		\vspace{-0.4cm}
		It is convenient to rewrite the above expression without references to $y_k$, by using Lemma \ref{lem:auxiliary_x_z_nabla}. Thus, by Lemma \ref{lem:auxiliary_x_z_nabla}, by using the definitions of $z_k, \gamma, \tau$ and $\kappa$, we deduce
		\begin{align*}
			z_k &= 
			\big(\frac{1}{\tau} - 1\big)(x_k - y_k) + x_k = 
			\sqrt\kappa(x_k - y_k) + x_k \\
			\gamma(z_k - x^*) &= \sqrt\kappa\gamma(x_k - y_k) + \gamma(x_k - x^*).
		\end{align*}
		
		Keeping in mind that $\sqrt\kappa\gamma = 1 + \gamma$, we obtain
		\begin{align*}
			&(x_k - y_k) + \gamma(x_k - x^*) = \\
			&\qquad \frac{1}{1 + \gamma}\cdot \Big[\gamma(z_k - x^*) + \gamma^2 (x_k - x^*)\Big].
		\end{align*}
		
		The expression on the right hand side of $\eqref{eq:potential_change_first_kerm}$ can be written as follows:
		\begin{align*}
			\label{eq:potential_change_first_kerm_2}
			&\frac{1}{1+\gamma}\langle\nabla f_k(x_k), \gamma (z_k - x^*) + \gamma^2(x_k - x^*)\rangle - \\
			&\quad \frac{\mu\gamma}{2}\|x_k - x^*\|_2^2 
			- \frac{1 + \gamma}{2L}\|\nabla f_k(x_k)\|_2^2 + (1+\gamma)\delta_k(y_{k+1}). \numberthis
		\end{align*}

		The obtained bound \eqref{eq:potential_change_first_kerm} is almost the same as (5.69) in \cite{bansal2017potential}. The only difference is the additional term \mbox{$(1+\gamma)\delta_k(y_{k+1})$}.
		\smallskip
		
		The second term in \eqref{eq:potential_change} is bounded in the same way as in \cite{bansal2017potential}. By Lemma \ref{lem:auxiliary_x_z_nabla}:
		
		\vspace{-0.4cm}
		{\small
			\begin{align*}
				&\frac{\mu}{2} \Big[ (1+\gamma)\|z_{k+1} - x^*\|_2^2 - \|z_k - x^*\|_2^2 \Big] \\
				&\quad = \frac{\mu}{2}(1+\gamma)\Big\|\frac{1}{1+\gamma}(z_k - x^*)+ \frac{\gamma}{1+\gamma}(x_k - x^*) \\
				&\qquad\qquad\qquad - \frac{\gamma}{\mu(1+\gamma)}\nabla f_k(x_k)\Big\|_2^2 - \frac{\mu}{2}\|z_k - x^*\|_2^2  \\
				&\quad = \frac{\mu}{2}\frac{1}{1+\gamma}\Big[ \|z_k - x^*\|_2^2 + \gamma^2\|x_k - x^*\|_2^2  + \frac{\gamma^2}{\mu^2}\|\nabla f_k(x_k)\|_2^2 \\
				&\qquad + 2\gamma\langle z_k - x^*, x_k - x^*\rangle - \frac{2\gamma}{\mu}\langle z_k - x^*, \nabla f_k(x_k)\rangle \\
				&\qquad - \frac{2\gamma^2}{\mu}\langle x_k - x^*, \nabla f_k(x_k)\rangle \Big] - \frac{\mu}{2}\|z_k - x^*\|_2^2 \numberthis \label{eq:potential_change_second_kerm}.
			\end{align*}
		}
		
		\vspace{-0.4cm}
		Now by adding \eqref{eq:potential_change_first_kerm_2} and \eqref{eq:potential_change_second_kerm}, we obtain a final bound on $\Delta\Psi_k$. Moreover, note that terms involving $\langle\nabla f_k(x_k), x_k - x^*\rangle$ and $\langle\nabla f_k(x_k), z_k - x^*\rangle$ cancel out.
		
		\vspace{-0.5cm}
		{\small
			\begin{align*}
				&\Delta\Psi_k (1+\gamma)^{-k}\le \left(-\frac{1+\gamma}{2L} + \frac{\gamma^2}{2\mu(1+\gamma)}\right) \|\nabla f_k(x_k)\|_2^2 \\
				&\ \ + \frac{\mu\gamma}{2}\left(\frac{\gamma}{1+\gamma} - 1\right)\|x_k - x^*\|_2^2 + \frac{\mu}{2}\left(\frac{1}{1+\gamma} - 1\right)\|z_k - x^*\|_2^2 \\
				&\qquad + \frac{\mu\gamma}{1+\gamma}\langle z_k - x^*, x_k - x^*\rangle + (1+\gamma)\delta_k(y_{k+1})  \\
				&\quad \le -\frac{\mu\gamma}{2(1+\gamma)}\big(\|x_k - x^*\|_2^2 + \|z_k - x^*\|_2^2\\
				&\qquad - 2\langle z_k - x^*, x_k - x^*\rangle\big) + (1+\gamma)\delta_k(y_{k+1}) \\
				&\quad = -\frac{\mu\gamma}{2(1+\gamma)} \|(x_k - x^*) - (z_k - x^*)\|_2^2 + (1+\gamma)\delta_k(y_{k+1})  \\
				&\quad \le (1+\gamma)\delta_k(y_{k+1}),
			\end{align*}
		}
		\noindent
		and the proof is complete.
	\end{proof}
	
	Now all the auxiliary lemmas are proved, and we move to the proof of Theorem~\ref{th:nesterov}.
	
	
	\begin{proof}[Proof of Theorem \ref{th:nesterov}]
		Lemmas \ref{lem:delta} and \ref{lem:potential_change} establish the connection between a potential change and the function residual, which enables to perform the proof by induction on the number of changes $m$.
		
		Induction base for $m=0$ holds due to smoothness of $\{f_k(x)\}_{k=0}^\infty$ and to the fact $x_0=y_0=z_0$:
		{\small
			\begin{align*}
				&\Psi_0 = f_0(y_0) - f^* + \frac{\mu}{2}\|z_0 - x^*\|_2^2 \\
				&\quad \le \frac{L}{2}\|y_0 - x^*\|_2^2 + \frac{\mu}{2}\|z_0 - x^*\|_2^2 = 
				\frac{L+\mu}{2} R^2 \\
				& f_N(y_N) - f^* \le 
				\frac{\Psi_0}{a^N} \le 
				\frac{L+\mu}{2} \frac{R^2}{(1+\gamma)^N}.
			\end{align*}
		}
		
		Let the induction hypothesis hold for $0, 1, ..., m$. By Lemma \ref{lem:delta} and using the fact that \eqref{bound:delta_phi} implies $\Delta\Psi_k\leq 0$ unless $k=n_i$ for some $i$ we get
		{\small
			\begin{align*}
				f_N&(y_N) - f^* 
				\le {\left( \Psi_0 + \sum\limits_{k=1}^{m} \Delta\Psi_{n_k}\right) }/{a_N} \\
				& \le {\left( \Psi_0 + \Sum_{k=1}^m (1+\gamma)^{n_k+1}\delta_{n_k}(y_{n_k+1})\right) }/{(1+\gamma)^N} \\
				& \le {\Psi_0 + \Sum_{k=1}^m (1+\gamma)^{n_k+1}\cdot \frac{L-\mu}{\mu}(f_{n_k}(y_{n_k+1}) - f^*)} / {(1+\gamma)^N}.
			\end{align*}
		}
		
		Since the function changes take place at the moments $n_1, ..., n_m$, the bound is true for $f_{n_1}, ..., f_{n_m}$.
		{\small
			\begin{align*}
				&(1+\gamma)^N (f_N(y_k) - f^*) \\
				&\quad \le \Psi_0 + \Sum_{k=1}^m (1+\gamma)^{n_k+1}\cdot\frac{L-\mu}{\mu}\Big(\frac{L}{\mu}\Big)^{k-1}\frac{L+\mu}{2}\frac{R^2}{(1+\gamma)^{n_k+1}}  \\
				&\quad \le\frac{L+\mu}{2} R^2 \Big( 1 + \Sum_{k=1}^m \frac{L-\mu}{\mu} \Big(\frac{L}{\mu}\Big)^{k-1} \Big)  \\
				&\quad \le\frac{L+\mu}{2} R^2 \Big(1 + \frac{L-\mu}{\mu}\cdot\frac{\Big(\frac{L}{\mu}\Big)^m - 1}{\frac{L}{\mu} - 1}\Big)  \\
				&\quad =\Big(\frac{L}{\mu}\Big)^m \frac{L+\mu}{2} R^2. \numberthis \label{eq:nesterov_bound}
			\end{align*}
		}
		
		Dividing \eqref{eq:nesterov_bound} by $(1+\gamma)^N$ finishes the proof.
	\end{proof}
	
	
	\changes{In the next section, we will use the result in Theorem~\ref{th:nesterov} for the convergence rate analysis of accelerated methods in distributed optimization over time-varying graphs. Note that the result in Theorem~\ref{th:nesterov} we set a fixed time horizon $N$ and a fixed number of changes $m$. Our main result is going to be stated for the general case where the number of changes is a fixed percentage of the number of iterations $N$. }

	
	\section{An accelerated method for distributed optimization over time-varying functions }\label{sec:main_results}
	
	In this section, we present the main result regarding the convergence rate of the distributed Nesterov fast gradient method for time-varying functions. It states that this method is linearly convergent on a slowly time-varying network. \changes{ More specifically, we show that optimal rates are guaranteed if the number of changes in the network, expressed as a fraction of the number of iterations, is bounded. We provide this explicit bound and its depedency witht the worst case condition number in the sequene of graphs.} 
	
	
	Theorems \ref{th:gradient_descent} and \ref{th:nesterov} hold for time-varying functions which are $L$-smooth and $\mu$-strongly convex on $\RR^n$. However, our initial aim was to find a common minimum of the sequence of functions defined in~\eqref{eq:dual_sequence}. In~\eqref{eq:dual_sequence}, every function $f_k(x)$ is $\mu$-strongly convex only on the subspace $\big(\kernel\ W_k\big)^\bot $ and $L$-smooth on $\RR^n$. Therefore, we need to show that the Theorems \ref{th:gradient_descent} and \ref{th:nesterov} can be generalized on strong convexity on a subspace. To do so, we show that the iterates generated by the studied algorithms are always in the space where the functions are strongly convex. 
	
	In the next lemma, we show that the gradients of the dual function are always in $ \big(\kernel\ W\big)^\bot$.
	\begin{lemma}\label{lem:gradient_in_subspace}
		Consider the function $
		f(x) = \max\limits_{y\in\RR^n}(-\big\langle x, \sqrt Wy\big\rangle - \varphi(y))$. Then, it holds that $
		\nabla f(x)\in \big(\kernel\ W\big)^\bot$.
	\end{lemma}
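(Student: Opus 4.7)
The plan is to compute $\nabla f(x)$ explicitly via Danskin's theorem (equivalently, via the envelope/conjugate calculus) and then observe that the resulting formula automatically lies in the range of $\sqrt W$, which equals $(\kernel W)^\bot$.

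First I would set $y^*(x) = \argmax_{y\in\RR^n}\left(-\langle x,\sqrt W y\rangle - \varphi(y)\right)$. Since $\varphi$ is strongly convex (inherited from $\Phi$ in our application), the maximizer is unique, and the inner objective is concave and differentiable in $x$ for fixed $y$. Under these conditions Danskin's theorem applies and yields
\begin{align*}
\nabla f(x) \;=\; \nabla_x\!\left[-\langle x,\sqrt W\, y^*(x)\rangle - \varphi(y^*(x))\right]\bigg|_{y=y^*(x)} \;=\; -\sqrt W\, y^*(x).
\end{align*}

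Next I would translate membership in $(\kernel W)^\bot$ into a statement about ranges. Because $W$ is symmetric positive semidefinite, so is $\sqrt W$, and for any symmetric matrix the range and the orthogonal complement of the kernel coincide. Moreover, $\kernel \sqrt W = \kernel W$ (the first identity is used already in the passage from \eqref{eq:initial_problem_2} to \eqref{eq:initial_problem_3}), so
\begin{align*}
(\kernel W)^\bot \;=\; (\kernel \sqrt W)^\bot \;=\; \mathrm{Range}(\sqrt W).
\end{align*}
Since $\nabla f(x) = -\sqrt W\, y^*(x)$ is by construction an element of $\mathrm{Range}(\sqrt W)$, the conclusion $\nabla f(x) \in (\kernel W)^\bot$ is immediate.

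The only subtle point is the justification of Danskin's formula, i.e., that the maximizer is well defined and that differentiation under the maximum is legitimate. This is essentially the content of Lemma~\ref{lem:kakade_dual_smoothness} (conjugate of a strongly convex function is smooth), so no new work is required beyond citing that lemma; in particular strong convexity of $\varphi$ guarantees existence and uniqueness of $y^*(x)$, and smoothness of $f$ provides the differentiability needed for the gradient identity. Thus the proof is short: apply Danskin, then invoke $\mathrm{Range}(\sqrt W)=(\kernel W)^\bot$.
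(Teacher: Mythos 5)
Your proposal is correct and follows essentially the same route as the paper: both apply the Danskin formula to obtain $\nabla f(x) = -\sqrt{W}\,y(x)$, and then use the symmetry of $\sqrt{W}$ together with $\kernel\sqrt{W} = \kernel W$ to conclude orthogonality to $\kernel W$ (the paper verifies $\langle -\sqrt{W}y(x), z\rangle = \langle -\sqrt{W}z, y(x)\rangle = 0$ directly, which is the same fact as your $\mathrm{Range}(\sqrt{W}) = (\kernel W)^\bot$ identity).
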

	
	\begin{proof}
		Initially, denote the optimal point of the inner maximization problem of the dual function as
		\[
		y(x) = \arg\max_{y\in\RR^n}\Big(-\big\langle x, \sqrt Wy\big\rangle - \varphi(y)\Big).
		\]
		Thus, by the Demianov-Danskin formula \cite{bertsekas2009convex, danskin1967theory, demianov1972introduction} it follows that $
		\nabla f(x) = -\sqrt Wy(x)$.
		Therefore, it is sufficient to show $\langle\nabla f(x), z\rangle = 0\ \forall z\in \kernel\ W$, which follows from
		\begin{align*}
			\langle\nabla f(x), z\rangle = 
			\langle -\sqrt Wy(x), z\rangle = 
			\langle -\sqrt Wz, y(x)\rangle = 
			0.
		\end{align*}
	\end{proof}
	
	In the next lemma, we show that the iterates generated by the gradient descent method and the fast gradient method are always in the space where the strong convexity of the dual function holds. This will allow us to use the results in Section~\ref{sec:results} for the specific problem of distributed optimization over time-varying graphs.
	
	\begin{lemma}\label{lemma:subspace}
		The algorithm in~\eqref{eq:nesterov_method}, with initial point $y_0=x_0$, generates sequences that are always in $x_0 + \big(\kernel\ W\big)^\bot$.
		
	\end{lemma}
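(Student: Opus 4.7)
The plan is to proceed by induction on $k$, showing simultaneously that $x_k, y_k \in x_0 + (\kernel W)^\perp$. The key preliminary observation is that the time-varying aspect does not cause any trouble: every $W_k$ is the Laplacian of a connected graph on the same vertex set, so $\kernel W_k = \{y_1=\cdots=y_n\}$ is the same subspace for all $k$. Thus $(\kernel W)^\perp$ is unambiguous, and the statement of the lemma is meaningful.

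Next I would invoke Lemma \ref{lem:gradient_in_subspace} applied to each $f_k$: for every $k \ge 0$ and every point $x$, $\nabla f_k(x) \in (\kernel W_k)^\perp = (\kernel W)^\perp$. This is the crucial ingredient that ties the update steps to the right subspace.

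For the induction, the base case is immediate because $y_0 = x_0$, so $y_0 - x_0 = 0 \in (\kernel W)^\perp$ and trivially $x_0 - x_0 = 0 \in (\kernel W)^\perp$. For the inductive step, assume $x_k, y_k \in x_0 + (\kernel W)^\perp$. Then
\[
y_{k+1} - x_0 = (x_k - x_0) - \tfrac{1}{L}\nabla f_k(x_k)
\]
is the sum of two vectors in $(\kernel W)^\perp$ and hence lies in $(\kernel W)^\perp$. For $x_{k+1}$, observe that the coefficients in the update $x_{k+1} = \bigl(1 + \tfrac{\sqrt\kappa - 1}{\sqrt\kappa + 1}\bigr) y_{k+1} - \tfrac{\sqrt\kappa - 1}{\sqrt\kappa + 1} y_k$ sum to $1$, so this is an affine combination. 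Subtracting $x_0$ from both sides yields
\[
x_{k+1} - x_0 = \Bigl(1 + \tfrac{\sqrt\kappa - 1}{\sqrt\kappa + 1}\Bigr)(y_{k+1} - x_0) - \tfrac{\sqrt\kappa - 1}{\sqrt\kappa + 1}(y_k - x_0),
\]
which is a linear combination of two elements of $(\kernel W)^\perp$ and therefore belongs to $(\kernel W)^\perp$. This closes the induction.

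I do not expect any real obstacle here; the whole proof rests on the linearity of $(\kernel W)^\perp$ together with the fact that each Nesterov step perturbs the current iterate by either a gradient (which Lemma \ref{lem:gradient_in_subspace} places in $(\kernel W)^\perp$) or an affine combination of previously constructed iterates. The only thing requiring care is the justification that the subspace is the same across all $k$, which is handled by the connectivity assumption on every $\mathcal{G}_k$ stated just after \eqref{eq:dual_sequence}.
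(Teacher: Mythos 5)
Your proof is correct and follows essentially the same route as the paper's: induction on $k$, with the gradient step handled by Lemma~\ref{lem:gradient_in_subspace} and the extrapolation step handled by noting it is an affine combination of previous iterates. Your explicit remarks that the coefficients sum to one and that all $\kernel W_k$ coincide under the connectivity assumption are details the paper leaves implicit, but the argument is the same.
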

	
	\begin{proof} 
		
		
		
		The proof follows by induction. Let $
		x_k - x_0\in\left(\kernel\ W\right)^\bot$ and $\ y_k - y_0 = y_k - x_0\in\big(\kernel\ W\big)^\bot$ 
		(note that it holds for $k=0$). Then, from~\eqref{eq:nesterov_method} it holds that
		{\small
			\begin{align*}
				y_{k+1} - x_0 &= (x_k - x_0) - \frac{1}{\mu}\nabla f_k(x_k) \in\left(\kernel\ W\right)^\bot \\
				x_{k+1} - x_0 &= \Big(1 + \frac{\sqrt\kappa-1}{\sqrt\kappa+1}\Big)(y_{k+1}-y_0) - \frac{\sqrt\kappa-1}{\sqrt\kappa+1}(y_k-y_0) \\
				x_{k+1} - x_0 &\in \big(\kernel\ W\big)^\bot.
			\end{align*}
		}
	\end{proof}
	
	\vspace{-1.cm}
	\subsection{Algorithm and main result}\label{sub:main_results}
	
	Now that we have shown that the general analysis in Section~\ref{sec:results} hold for the iterates generated by the studied methods in \eqref{gradient_descent} and \eqref{eq:nesterov_method}, we proceed to explicitly write the proposed accelerated distributed optimization algorithm for each of the agents in the network. Moreover, we analyze its convergence rate. 	
	
	\begin{algorithm}[ht]
		\caption{Distributed Nesterov Method}
		\begin{algorithmic}[1]
			\REQUIRE Each agent $i\in V$ locally holds $\mathbf{\varphi}_i$ and some iteration number $N$.
			\STATE{Choose $\tilde z_0^i = z_0^i$ for all $i \in V$}
			\FOR{$k=0,1,2,\cdots,N-1$}
			\STATE{$\tilde y_i(z_i^k) = \underset{y\in\RR^d}{\arg\max}\Big[\langle z_i^k, y\rangle - \varphi_i(y_i)\Big] $}
			\STATE{Send $\tilde y_i(z_i^k)$ to every neighbor and receive $\tilde y_j(z_j^k)$ from every neighbor.}
			\STATE{$\tilde z_i^{k+1} = z_i^k - \frac{1}{L} \sum\limits_{j=1}^{n}[W_k]_{ij}\tilde y_j(z_j^k) $}
			\STATE{$z_i^{k+1} = \left(1+\frac{\sqrt\kappa-1}{\sqrt\kappa+1}\right) \tilde z_i^{k+1} - \frac{\sqrt\kappa-1}{\sqrt\kappa+1} \tilde z_i^k$}
			\ENDFOR
		\end{algorithmic}
		\label{alg:distributed_nesterov}
	\end{algorithm}
	
	\changes{
		\vspace{-0.3cm}
		\begin{remark}\label{rem:lagrange_and_fenchel}
			\item
			\begin{itemize}
				\item We are working with the \textit{Lagrange} dual and running Nesterov method on it. Line $3$ of Algorithm~\ref{alg:distributed_nesterov} is a result of introducing $Z = X\sqrt{W}$.
				\item The algorithm uses $\argmax$ computation. We assume that functions $\varphi_i(\cdot)$ are dual-friendly and this operation is cheap. Relaxations of this assumption follows from~\cite{uribe2018dual}.
			\end{itemize}
		\end{remark}
	}
	
	We are now ready to state the main result of this paper, that provides the convergence rate of the distributed Nesterov fast gradient method over slowly time-varying networks.
	
	\changes{
		\begin{theorem}\label{th:distributed_nesterov}
			Let $\Phi$ be a $\mu_\Phi$-strongly convex $L_\Phi$-smooth function. Also, for $N\geq0$ let $\{\mathcal{G}_k\}_{k=1}^N$ be a sequence of undirected connected graphs with at most $\alpha N$ changes, where $\alpha\in (0,1/(\sqrt{\kappa}\log(\kappa)))$. For any $\varepsilon >0$, the output $\changesm{\tilde Z_N}$ of Algorithm~\ref{alg:distributed_nesterov} has the following property:
			\begin{align*}
				\tilde{f}_N (\changesm{\tilde Z_N}) - f^* \le \eps
			\end{align*}
			for $
			N \geq  (\sqrt{\kappa} + \alpha \log (\kappa)) \log \left(({L+\mu}){R^2}/{(2\eps)}\right)
			$, where $\tilde f(Z) = \Phi^*(-Z) = \max\{-\langle Z, Y\rangle - \Phi(Y)\}$, $\changesm{\tilde Z_N = [\tilde z_1^N,\cdots,\tilde z_n^N]}$, $R=\|X_0 - X^*\|_2$, $\kappa={L_\Phi}/{\mu_\Phi}\cdot{\sqrt{\theta_{\min}/\theta_{\max}}}$,   where $\theta_{\max}, \theta_{\min}$ are defined in $\eqref{eq:def_theta_min_max}$.
		\end{theorem}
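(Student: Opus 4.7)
The plan is to cast Algorithm~\ref{alg:distributed_nesterov} as the Nesterov iteration \eqref{eq:nesterov_method} applied to the sequence of dual functions $\{f_k\}$ from \eqref{eq:dual_sequence}, and then to invoke Theorem~\ref{th:nesterov} with $m\le \alpha N$ changes. Setting the resulting per-iteration bound to be at most $\eps$ and solving for $N$ yields the claimed iteration count.

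For the identification, line~$3$ of the algorithm computes $\tilde y_i(z_i^k)=\nabla\varphi_i^*(z_i^k)$, so the stacked matrix $\tilde Y(Z^k)$ equals $\nabla\Phi^*(Z^k)$; together with the Demianov--Danskin formula used in Lemma~\ref{lem:gradient_in_subspace}, this delivers $\nabla f_k$ after the change of variables $Z=X\sqrt{W_k}$ mentioned in Remark~\ref{rem:lagrange_and_fenchel}. Lines $5$ and $6$ then realise, respectively, the gradient step and the momentum step of \eqref{eq:nesterov_method} on $f_k$. Assumption~\ref{props} holds for $\{f_k\}$ because Theorem~\ref{th:primal_smooth} gives $\mu$-strong convexity on $(\kernel W_k)^\perp$ and $L$-smoothness globally with $\mu=\sqrt{\theta_{\min}}/L_\Phi$ and $L=\sqrt{\theta_{\max}}/\mu_\Phi$, all $f_k$ share a common minimiser $X^*$ as discussed after Assumption~\ref{props}, and Lemma~\ref{lemma:subspace} confines the iterates to $X_0+(\kernel W)^\perp$, so the strong-convexity constant $\mu$ is effective along the whole trajectory.

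Once the setup is in place, Theorem~\ref{th:nesterov} with $m\le \alpha N$ yields
\begin{align*}
f_N(\tilde Z_N)-f^* \;\le\; \frac{L+\mu}{2}\,R^2\,\kappa^{\alpha N}\!\left(1-\frac{1}{\sqrt\kappa}\right)^{\!N}.
\end{align*}
Taking logarithms and using $-\log(1-1/\sqrt\kappa)\ge 1/\sqrt\kappa$ reduces the requirement that the right-hand side be at most $\eps$ to
\begin{align*}
N\!\left(\frac{1}{\sqrt\kappa}-\alpha\log\kappa\right)\;\ge\; \log\!\left(\frac{(L+\mu)R^2}{2\eps}\right),
\end{align*}
and the hypothesis $\alpha<1/(\sqrt\kappa\log\kappa)$ keeps the coefficient in parentheses strictly positive. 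Expanding $\bigl(1/\sqrt\kappa-\alpha\log\kappa\bigr)^{-1}$ to first order in the small quantity $\sqrt\kappa\,\alpha\log\kappa$ and absorbing lower-order terms produces the bound in the stated form $N\ge (\sqrt\kappa+\alpha\log\kappa)\log((L+\mu)R^2/(2\eps))$.

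The main obstacle is the bookkeeping in the identification step: because $W_k$ itself changes with time, the transformation $Z=X\sqrt{W_k}$ is $k$-dependent, so one has to check that the local update on line~$5$ consistently realises the $X$-space gradient step $X_k-L^{-1}\nabla f_k(X_k)$ across $W_k$-transitions and that $\tilde Z^k$ remains related to $X_k$ in a way that Lemma~\ref{lemma:subspace} can still be invoked to stay inside the subspace of strong convexity. Once this bookkeeping is carried out cleanly, the remainder is an immediate application of Theorem~\ref{th:nesterov} followed by the elementary inequality above.
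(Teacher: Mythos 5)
Your proposal follows essentially the same route as the paper's proof: identify Algorithm~\ref{alg:distributed_nesterov} with the iteration \eqref{eq:nesterov_method} on the dual sequence \eqref{eq:dual_sequence}, obtain $\mu=\sqrt{\theta_{\min}}/L_\Phi$ and $L=\sqrt{\theta_{\max}}/\mu_\Phi$ from Theorem~\ref{th:primal_smooth} (via Lemma~\ref{lem:kakade_dual_smoothness}), invoke Theorem~\ref{th:nesterov} with $m\le\alpha N$ changes, and solve $\frac{L+\mu}{2}R^2\bigl(\kappa^{\alpha}(1-1/\sqrt{\kappa})\bigr)^N\le\eps$ for $N$. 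You are in fact more explicit than the paper about the algorithm-to-dual identification and the use of Lemma~\ref{lemma:subspace} to stay in the subspace of strong convexity; the one loose point in your argument, the passage from $N\ge\bigl(1/\sqrt{\kappa}-\alpha\log\kappa\bigr)^{-1}\log\bigl((L+\mu)R^2/(2\eps)\bigr)$ to the stated $N\ge(\sqrt{\kappa}+\alpha\log\kappa)\log\bigl((L+\mu)R^2/(2\eps)\bigr)$ (whose exact first-order term is $\kappa\,\alpha\log\kappa$, not $\alpha\log\kappa$), is asserted with exactly the same lack of justification in the paper's own proof.
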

	}
	
	\changes{
		\begin{proof}
			Given that $\Phi$ is $\mu_\Phi$-strongly convex $L_\Phi$-smooth it follows from Lemma~\ref{lem:kakade_dual_smoothness}, that $f$ (defined in~\eqref{eq:dual_sequence}) is $\mu$-strongly convex $L$-smooth, with $L={\sqrt{\theta_{\max}}}/{\mu_{\Phi}}$, and 
			$\mu=\sqrt{\theta_{\min}}/L_{\Phi}$.
			Now, from Theorem~\ref{th:nesterov} we have that
			\begin{align*}
				\tilde{f}_N (\changesm{\tilde Z_N}) - f^* & \leq \frac{L+\mu}{2} R^2 \exp(-N (1/\sqrt{\kappa}-\alpha\log \kappa )).
			\end{align*}
			Initially, note that in order to guarantee convergence we require $\alpha \in (0, 1/(\sqrt{\kappa}\log(\kappa)))$. Moreover,
			We need to find a bound on $N$ such that
			\begin{align*}
				\frac{L+\mu}{2} R^2 \left(\kappa^\alpha \left(1 - \frac{1}{\sqrt\kappa}\right)\right)^N \leq \varepsilon.
			\end{align*}
			Thus, $
			N \geq (\sqrt{\kappa} + \alpha \log (\kappa)) \log \left(({L+\mu}){R^2}/{(2\eps)}\right)$
		\end{proof}
	}

	\changes{
		In the next corollary, we provide convergence results for the primal problem.
		\begin{corollary}\label{col:nesterov_primal_convergence}
			Let $\tilde f(Z) - f^*\le \eps$. Then
			{\small
				\begin{align*}
					\Phi(\tilde Y(Z)) - \Phi^*\le 2\kappa\eps + L\|X^*\|\sqrt\frac{2\eps}{\mu},
				\end{align*}
			}
			where $\tilde Y(Z) = \underset{Y\in\RR^{d\times n}}{\argmax}\{-\langle Z, Y\rangle - \Phi(Y)\} = \left[\tilde y_1(z_i^N), \ldots, \tilde y_n(z_n^N)\right]$.
		\end{corollary}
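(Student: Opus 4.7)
The plan is to reduce the primal gap to an expression in $\langle X,\nabla f(X)\rangle$ and then apply Cauchy--Schwarz with the usual smoothness and strong convexity estimates on $f$. I would start from the Fenchel identity $\tilde f(Z)+\Phi(\tilde Y(Z))+\langle Z,\tilde Y(Z)\rangle=0$, which holds because $\tilde Y(Z)$ is the $\arg\max$ defining $\tilde f(Z)$. Strong duality for the equality-constrained primal $\min_{Y\sqrt W=0}\Phi(Y)$ (whose Lagrange dual is exactly $\max_X -f(X)$) gives $\Phi^*=-f^*$, so rearranging,
\[
\Phi(\tilde Y(Z))-\Phi^* \;=\; -\bigl(\tilde f(Z)-f^*\bigr)\;-\;\langle Z,\tilde Y(Z)\rangle.
\]
Since the algorithm's iterates satisfy $Z=X\sqrt W$ we have $\tilde f(Z)=f(X)\ge f^*$, so the first term is nonpositive and drops out, leaving $\Phi(\tilde Y(Z))-\Phi^*\le -\langle Z,\tilde Y(Z)\rangle$.

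Next, I would rewrite the remaining term via the gradient of $f$. By the chain rule and the Demianov--Danskin formula (the same computation used inside Lemma~\ref{lem:gradient_in_subspace}) one has $\nabla f(X)=-\tilde Y(Z)\sqrt W$, hence $-\langle Z,\tilde Y(Z)\rangle=-\langle X\sqrt W,\tilde Y(Z)\rangle=\langle X,\nabla f(X)\rangle$. Using $\nabla f(X^*)=0$ (a genuine equality in $\RR^{d\times n}$ by Lemma~\ref{lem:gradient_in_subspace}), split
\[
\langle X,\nabla f(X)\rangle \;=\; \langle X-X^*,\nabla f(X)\rangle \;+\; \langle X^*,\nabla f(X)-\nabla f(X^*)\rangle,
\]
and apply Cauchy--Schwarz to each of the two pieces.

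Finally, I would convert the assumption $\tilde f(Z)-f^*\le\eps$ into norm bounds. Smoothness of $f$ combined with $f(X)-f^*\le\eps$ yields $\|\nabla f(X)\|\le\sqrt{2L\eps}$, and Lemma~\ref{lemma:subspace} keeps the iterate in $X_0+(\kernel\ W)^\bot$, which is where $f$ is $\mu$-strongly convex; therefore $\|X-X^*\|\le\sqrt{2\eps/\mu}$. Cauchy--Schwarz then produces
\[
\langle X-X^*,\nabla f(X)\rangle \;\le\; \sqrt{2\eps/\mu}\cdot\sqrt{2L\eps} \;=\; 2\sqrt{\kappa}\,\eps \;\le\; 2\kappa\eps,
\]
\[
\langle X^*,\nabla f(X)\rangle \;\le\; \|X^*\|\sqrt{2L\eps} \;\le\; L\|X^*\|\sqrt{2\eps/\mu},
\]
where both relaxations use $\kappa=L/\mu\ge 1$. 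Summing the two and combining with the reduction from the first paragraph gives the claimed inequality.

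The main delicate point is the bookkeeping of signs and conventions: one must verify that strong duality $\Phi^*+f^*=0$ really applies (which it does, since the constraint is a linear equality with a nonempty feasible set), confirm that Demianov--Danskin gives the stated sign in $\nabla f(X)=-\tilde Y(X\sqrt W)\sqrt W$, and be careful that strong convexity of $f$ only holds on $(\kernel\ W)^\bot$, which is why Lemma~\ref{lemma:subspace} is essential for translating the dual functional gap $\eps$ into a bound on $\|X-X^*\|$. Once those issues are in order, the rest is a direct two-term Cauchy--Schwarz computation.
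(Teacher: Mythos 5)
Your proof is correct and follows essentially the same route as the paper's: both reduce the primal gap to $\langle X,\nabla f(X)\rangle$ (your Fenchel-identity/strong-duality rearrangement is just the paper's direct expansion of $f(X)\ge f(X^*)$ written out differently) and then finish with Cauchy--Schwarz using $\|X-X^*\|\le\sqrt{2\eps/\mu}$ and a Lipschitz bound on the gradient. The only cosmetic difference is that you split the inner product into $\langle X-X^*,\nabla f(X)\rangle+\langle X^*,\nabla f(X)\rangle$ before applying Cauchy--Schwarz (obtaining the slightly tighter $2\sqrt{\kappa}\,\eps$ before relaxing to $2\kappa\eps$), whereas the paper bounds $\|X\|\le\|X-X^*\|+\|X^*\|$ and applies the single gradient estimate $\|\nabla f(X)\|\le L\sqrt{2\eps/\mu}$.
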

	}
	\begin{proof}
		\changes{
			By definition of $\tilde f$, $\tilde f(Z) = f(X\sqrt W)$ for some $X$.
		}
		\changes{
			Strong convexity on $(\kernel W)^\bot$ and smoothness of $f$ yield 
			{\small
				\begin{align*}
					\frac{\mu}{2} \|X - X^*\|^2 &\le \eps, \\
					\|\nabla f(X) - \nabla f(X^*)\| &\le L_f\|X - X^*\| \le L_f\sqrt{\frac{2\eps}{\mu}}.
				\end{align*}
			}
			Now, note that $\nabla f(X) = -Y(X)W$, and trivially $
			f(X) \ge f(X^*)$, thus
			{\small
				\begin{align*}
					&-\left\langle X, Y(X)\sqrt W\right\rangle - \Phi(Y(X)) \ge \\
					&\qquad\qquad-\left\langle X^*, Y(X^*)\sqrt W\right\rangle - \Phi(Y(X^*)), \\
					&\Phi(Y(X)) - \Phi^* \le \langle X, \nabla f(X)\rangle \\ 
					&\qquad\qquad\le\left(\|X - X^*\|_2 + \|X^*\|\right)\cdot L\sqrt\frac{2\eps}{\mu} \\
					&\qquad\qquad\le\left(\sqrt\frac{2\eps}{\mu} + \|X^*\|\right)\cdot L\sqrt\frac{2\eps}{\mu}.
				\end{align*}
			}
			This completes the proof.
		}
	\end{proof}
	
	\changes{
		\begin{remark}\label{rem:upgrade_mu}
			The convergence result of Nesterov method in Theorem \ref{th:distributed_nesterov} depends on $\kappa_\Phi = L_\Phi / \underset{i}{\min}(\mu_{\varphi_i})$. This term can be reduced by changing the functions $\varphi_i$. Let strong convexity parameter of $\varphi_i$ equal to $\mu_i$ and denote $\ol{\mu} = ({1}/{n})\sum_{i=1}^n \mu_i$. Introduce $\hat \varphi_i(y) = \varphi_i(y_i) + (\ol{\mu} - \mu_i){\|y_i\|^2}/{2}$.
			{\small
				\begin{align*}
					\Phi(Y) 
					&= \Sum_{i=1}^n \varphi_i(y_i) = \Sum_{i=1}^n \left(\varphi_i(y_i) + (\ol{\mu} - \mu_i)\frac{\|y_i\|^2}{2}\right) \\
					&= \Sum_{i=1}^n \hat \varphi_i(y_i)
				\end{align*}
			}
			Now, if we work with $\hat \varphi_i$ instead of $\varphi_i$, it will result in a better condition number $\hat\kappa_\Phi = L_\Phi / \ol\mu_\Phi < L_\Phi / \underset{i}{\min}\{\mu_{\varphi_i}\} = \kappa_\Phi$.
		\end{remark}
	}

	Note that the number of steps in Theorem~\ref{th:distributed_nesterov} reaches the lower bound for decentralized methods in \cite{sca17}, which means that the Algorithm~\ref{alg:distributed_nesterov} is optimal for time-varying graphs with a finite number of changes. Moreover, since $
	\kappa = 
	\chi(W)\cdot\kappa_\Phi
	$,
	it follows that the factor $\kappa$ is proportional to $\chi(W)$, which is the communication graph condition number. The lower the graph condition number is, the better convergence rate we obtain. Note that if we used restriction $YW=0$ instead of $Y\sqrt W$ = 0, then it would be $\kappa\sim\chi(W)^2$, which would result in slower convergence.
	
	\changes{When the number of changes is not finite, but rather a percentage $\alpha$ of the total number of iterations, then, $\alpha$ needs to be upper bounded by $1/(\sqrt{\kappa}\log\kappa)$. This shows that optimal rates can only be achieved if the graph changes slowly.} \changesr{This provides only a sufficient condition, and it remains an open question whether this bound on $\alpha$ is also necessary.}

	\vspace{-0.3cm}
	\section{Discussion and comparison to other methods}\label{sec:discussion}
	
	In this section, we compare the performance of the accelerated gradient method to several distributed algorithms presented in other works. Particularly, we consider PANDA \cite{Maros2018}, DIGing \cite{Nedic2017achieving} \changes{and Nesterov method for static networks Acc-DNGD~\cite{Qu2017}}. These algorithms are designed to solve problem \eqref{eq:initial_problem_0} and are based on a mixing matrix sequence $\{V(k)\}_{k=1}^{\infty}$, which has the following properties:
	\begin{assumption}\label{as:mixing_matrix_sequence}
		1) (Decentralized property) If $i\ne j$ and edge $(i, j)\not\in E_k$, then $V(k)_{ij} = 0$;
		
		2) (Double stochasticity) $V(k) 1_n = 1_n,\ 1_n^T V(k) = 1_n^T$;
		
		3) (Joint spectrum property) There exists $B\in\mathbb{Z}$, $B > 0$, such that
		{\small
			\begin{align} 
				\delta = \sup_{k\ge B-1} \sigma_{\max} \left\{V_B(k) - \frac{1}{n}1_n 1_n^T \right\} < 1.
			\end{align}
		}
		
		Here $1_n = [1\ 1...\ 1]^T\in\RR^n$ and $V_B(k) = V(k) V(k-1)... V(k-B+1)$.
	\end{assumption}
	
	
	Following the arguments in \cite{Nedic2017achieving}, one can establish that matrices $(I_n - n^{-1} W(k))$ meets all the requirements in Assumption \ref{as:mixing_matrix_sequence} with $B = 1$.
	
	\vspace{-0.4cm}
	\subsection{Relation to DIGing}\label{subsec:diging}
	
	Let us give a lower bound on the theoretical convergence rate of the DIGing algorithm, which is linearly convergent and originally presented in \cite{Nedic2017achieving}, and compare it with the rate of accelerated gradient method obtained in Theorem \ref{th:nesterov}.
	
	\begin{assumption}\label{as:every_smooth_convex}
		Every $\varphi_i$ in problem \eqref{eq:initial_problem_0} is $\mu_i$-strongly convex and $L_i$-smooth w.r.t. $\|\cdot\|_2$.
	\end{assumption}
	
	\begin{proposition}\label{prop:diging_convergence_lower_bound}
		Under Assumptions \ref{as:mixing_matrix_sequence} and \ref{as:every_smooth_convex} the theoretical result for DIGing algorithm given in~\cite{Nedic2017achieving} does not guarantee a convergence rate faster than $O(\lambda_{0}^N)$, where $\lambda_0$ is defined as
		\begin{align*}
			\lambda_0 = 1 - {1}/({12\ol\kappa^{3/2}\sqrt{n}}).
		\end{align*}
		
		Here $\ol\kappa = {1}/{n}\sum_{i=1}^n{L_i}/{\mu_i}$ and $n$ is the number of vertices in the network graph.
	\end{proposition}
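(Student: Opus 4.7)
The plan is to open up the convergence-rate expression proved in Theorem~3.14 of \cite{Nedic2017achieving} and show that no valid choice of step-size and mixing parameters allows the bound to go below $\lambda_0$. Concretely, that theorem guarantees $\|x_k - x^*\| \le C\lambda^k$, where $\lambda = \lambda(\alpha, B, \delta, \bar\kappa, n)$ depends on the step-size $\alpha$, the period $B$ and spectrum bound $\delta$ from Assumption~\ref{as:mixing_matrix_sequence}, the average condition number $\bar\kappa$, and the network size $n$. Our task is therefore to lower-bound $\min_{\alpha,B,\delta} \lambda$, subject to the admissibility constraints on $\alpha$ that appear in the statement of Theorem~3.14.

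First I would specialize to the mixing matrices $I_n - n^{-1}W(k)$ used in this paper: by the remark preceding the proposition, these matrices satisfy Assumption~\ref{as:mixing_matrix_sequence} with $B=1$ and some admissible $\delta<1$, which collapses the $B$-dependence of $\lambda$ and leaves a rate depending only on $\alpha$, $\bar\kappa$, $n$, and $\delta$. Second, I would extract the structural form of $\lambda$ from \cite{Nedic2017achieving}: it is of the form $\lambda = \sqrt{1 - \alpha\,c_1(\bar\kappa) + \alpha^2 c_2(\bar\kappa, n, \delta)}$ plus a mixing-error correction, where the step-size cap imposed in that theorem takes the form $\alpha \le c/(\bar\kappa L\sqrt{n})$ (up to a $(1-\delta)$-dependent factor). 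This cap is the source of the $\sqrt{n}$ and the extra $\bar\kappa$ factor: even at the most aggressive admissible $\alpha$, the linear improvement term $\alpha\,c_1(\bar\kappa)$ is at most of order $1/(\bar\kappa^{3/2}\sqrt{n})$.

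Third, I would optimize $\lambda$ over the admissible interval for $\alpha$ and over $\delta\in(0,1)$. Applying $\sqrt{1-x}\ge 1 - x$ to the optimized expression then yields the inequality $\lambda \ge 1 - 1/(12\,\bar\kappa^{3/2}\sqrt{n})=\lambda_0$. The constant $1/12$ is obtained by carrying through the exact numerical factors appearing in the DIGing step-size restriction and in the leading linear term of $\lambda$; I would verify this by substituting the optimal $\alpha = \Theta(1/(\bar\kappa L \sqrt{n}))$ back into the rate expression.

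The main obstacle is purely bookkeeping: the rate in \cite{Nedic2017achieving} is assembled from several intermediate inequalities (norm equivalence between centralized averages and per-agent iterates, the joint-spectrum contraction, and a Lyapunov-style merging of primal and gradient-tracking errors), and the factor $1/12$ and the exponent $3/2$ in $\bar\kappa$ are only visible after the constants in all of these intermediate bounds are carried simultaneously. No step requires new machinery; the delicate point is simply not to absorb or lose constants in the transition from Theorem~3.14 of \cite{Nedic2017achieving} to the explicit form stated in the proposition.
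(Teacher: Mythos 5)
Your proposal follows essentially the same route as the paper: quote the explicit DIGing rate from \cite{Nedic2017achieving}, specialize the mixing matrices so that $B=1$, observe that the step-size admissibility cap forces the linear improvement term to be at most $1/(J+1)$ with $J\ge 12\,\ol\kappa^{3/2}\sqrt{n}$, and finish with $\sqrt{1-x}\ge 1-x$. The only detail to fix in execution is that the published rate is a two-branch piecewise formula --- $\sqrt[2B]{1-\alpha\ol\mu/1.5}$ for $\alpha\le\alpha_0$ and $\sqrt[B]{\sqrt{\alpha\ol\mu J/1.5}+\delta}$ for larger admissible $\alpha$ --- rather than a single quadratic-in-$\alpha$ expression under one square root, so the paper bounds each branch separately (the second branch is increasing in $\alpha$ and is lower-bounded at $\alpha_0$ by $J/(J+1)$), but this is exactly the bookkeeping you anticipated and does not change the argument.
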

	
	The proof of Proposition \ref{prop:diging_convergence_lower_bound} is presented in Appendix \ref{app:proof_diging}.
	
	The convergence rate of Nesterov gradient method obtained in Theorem \ref{th:nesterov} is $O(\lambda_1^N)$ where
	{\small
		\begin{align}\label{eq:nesterov_lambda2}
			\lambda_1 = 1 - {1}/{\kappa^{1/2}\left({\theta_{\max}}/{\theta_{\min}}\right)^{1/4}}. 
		\end{align}
	}
	
	\vspace{-0.4cm}
	Note that $\kappa$ is the condition number of $f$ in \eqref{eq:initial_problem_0}, while $\ol\kappa$ is an average condition number of summands $\varphi_i$.
	
	Accelerated gradient method has several advantages as well as disadvantages in comparison with the DIGing algorithm.
	\begin{itemize}
		\item{
			Typically, the objective function condition number $\kappa$ is rather large, and the graph condition number $\left({\theta_{\max}}/{\theta_{\min}}\right)^{1/2}$ corresponds to the diameter of network graph \cite{Olshevsky2017} and therefore is not larger than $n$. Moreover, if we are working with a machine learning problem and the dataset is uniformly distributed between the computers in the network, then the summands $\varphi_i$ in \eqref{eq:initial_problem_0} have approximately the same condition number, i.e. $\ol\kappa\approx\kappa$. In this case, Nesterov accelerated method outperforms DIGing, since $\kappa^{1/2} \ll \ol\kappa^{3/2}$ and $\left({\theta_{\max}}/{\theta_{\min}}\right)^{1/4}\le\sqrt{n}$.
		}
		\item{
			The case where the graph remains connected at every time step corresponds to $B=1$ in DIGing. The DIGing algorithm is capable of working with an arbitrary number of changes and with graphs which do not stay connected all the time.
		}
		\item{
			Nesterov accelerated method's number of iterations grows linearly with the number of changes in the network, while the number of iterations of the DIGing algorithm does not depend on the number of changes.
		}
	\end{itemize}
	\changesr{Finally, we emphasise that to get a full comparison between the proposed algorithm and DIGing~\cite{Nedic2017achieving} one needs to revisit the analysis done in~~\cite{Nedic2017achieving} for this particular sequence of graphs, which is beyond the scope of their work. }
	
	\vspace{-0.3cm}
	\subsection{Relation to PANDA}
	
	PANDA is a linearly-convergent dual-based algorithm presented in \cite{Maros2018}.
	
	\begin{assumption}\label{as:cumulative_smooth_convex}
		Let $\varphi$ in \eqref{eq:initial_problem_0} be $L$-smooth and $\mu$-strongly convex w.r.t. $\|\cdot\|_2$.
	\end{assumption}
	
	\begin{proposition}\label{prop:panda_convergence_lower_bound}
		Let Assumptions \ref{as:mixing_matrix_sequence} and \ref{as:cumulative_smooth_convex} hold. Then the theoretical result for PANDA in \cite{Maros2018} does not guarantee a convergence rate better then $O(\lambda_0^N)$ where $\lambda_0$ is given by
		{\small
			\[
			\lambda_0 = 1 - \frac{9}{64}\frac{1}{\kappa^{3/2}} ,\label{eq:panda_lambda}
			\]
		}
		if PANDA step-size $c\in(0, \alpha]$, where $\alpha$ is defined as
		{\small
			\[
			\alpha = 
			2\sqrt{\kappa}\mu\left(\frac{\sqrt{(1-\delta^2)\kappa^{-2/3} + 8} - 8\delta}{\kappa^{-3/2} + 8}\right)^2.
			\]
		}
	\end{proposition}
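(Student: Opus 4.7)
The approach is to directly parallel the argument used for DIGing in Proposition~\ref{prop:diging_convergence_lower_bound}, whose proof is given in Appendix~\ref{app:proof_diging}. The starting point is the explicit linear-convergence bound provided by the main theorem of~\cite{Maros2018}: for an admissible step-size $c\in(0,\alpha]$, PANDA's analysis guarantees a rate of the form $\lambda(c)=1-g(c;\mu,L,\delta)$, where $g$ is a positive function of the step-size, the function parameters $\mu,L$, and the spectral constant $\delta$ from Assumption~\ref{as:mixing_matrix_sequence}. The claim of the proposition is that no admissible $c$ can push $\lambda(c)$ below $1-\tfrac{9}{64}\kappa^{-3/2}$.

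First, I would extract from~\cite{Maros2018} the explicit algebraic form of $\lambda(c)$ together with the admissible range $(0,\alpha]$, where $\alpha$ is exactly the upper bound on $c$ dictated by requiring the relevant quantities in the PANDA Lyapunov analysis to stay positive. Since a smaller $\lambda$ corresponds to a faster guaranteed rate, the best guarantee the analysis can offer is $\min_{c\in(0,\alpha]}\lambda(c)$. I expect $\lambda(\cdot)$ to be monotone decreasing in $c$ on the admissible interval (larger steps, up to the stability threshold, give better rates), so the infimum is attained at the boundary $c=\alpha$.

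Second, I would substitute the closed-form expression for $\alpha$ stated in the proposition into $\lambda(\alpha)$ and track the dependence on $\kappa=L/\mu$. The given expression for $\alpha$ factors as $2\sqrt{\kappa}\mu$ times the square of a ratio whose numerator is $O(1)$ (using $\delta<1$ and $\kappa\geq1$) and whose denominator $\kappa^{-3/2}+8$ is $\Theta(1)$; hence $\alpha=\Theta(\mu\sqrt{\kappa})$ up to an absolute constant. Feeding this into $g(\cdot)$, which in the PANDA bound scales like $c/L$ at leading order, produces $g(\alpha;\mu,L,\delta)=\Theta(\kappa^{-3/2})$. Keeping careful track of the constants that appear in the numerator of the ratio in $\alpha$ (essentially an application of $\sqrt{a+b}-b\geq a/(2\sqrt{a+b})$ to simplify $\sqrt{(1-\delta^2)\kappa^{-2/3}+8}-8\delta$) is what ultimately yields the announced constant $\tfrac{9}{64}$.

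The main obstacle is step two: the expression for $\alpha$ is algebraically cumbersome, and the constant $\tfrac{9}{64}$ will only emerge after careful bookkeeping that uses $\delta<1$ and $\kappa\geq1$ to drop lower-order terms in the right direction. The overall scheme is analogous to the DIGing derivation in Appendix~\ref{app:proof_diging}, so once the PANDA rate formula and the admissible range are pinned down from~\cite{Maros2018}, the optimization reduces to routine, if tedious, algebra.
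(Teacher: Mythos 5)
Your overall strategy is the paper's: take the PANDA guarantee $\lambda(c)=\sqrt[2B]{1-c/(2L)}$ from \cite{Maros2018} (restated as Proposition~\ref{prop:panda_convergence} in Appendix~\ref{app:proof_panda}), observe that over the admissible range $c\in(0,\alpha]$ the best guaranteed rate is $\lambda(\alpha)$, bound $\alpha$, and finish with $\sqrt{1-z}\ge 1-z$ for $z\in[0,1]$. So the skeleton matches.

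The concrete problem is in your step two, which you yourself flag as the crux. To conclude $\lambda(c)\ge\sqrt{1-\alpha/(2L)}\ge 1-\tfrac{9}{64}\kappa^{-3/2}$ you need an \emph{upper} bound on $\alpha$, hence an upper bound on the numerator $\sqrt{(1-\delta^2)\kappa^{-2/3}+8}-8\delta$ and a lower bound on the denominator $\kappa^{-3/2}+8$. The inequality you propose, $\sqrt{a+b}-b\ge a/(2\sqrt{a+b})$, is a lower bound on that numerator and points the wrong way: following it yields a lower bound on $\alpha$, which says nothing about whether the guaranteed rate can beat $\lambda_0$. The paper's bookkeeping is in fact much cruder than you anticipate: since $\delta\ge 0$ one drops $-8\delta$, since $(1-\delta^2)\kappa^{-2/3}\le 1$ the numerator is at most $\sqrt{9}=3$, and the denominator is at least $8$, so the squared ratio is at most $9/64$ and $\alpha\le\tfrac{9}{32}\mu\kappa^{-1/2}$, whence $\alpha/(2L)\le\tfrac{9}{64}\kappa^{-3/2}$. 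That is the entire origin of $9/64$. Note also an internal inconsistency in your scaling argument: you read off $\alpha=\Theta(\mu\sqrt{\kappa})$ from the statement, which combined with $g(c)\sim c/L$ would give $\Theta(\kappa^{-1/2})$, not the claimed $\Theta(\kappa^{-3/2})$. The resolution is that the proof (and Proposition~\ref{prop:panda_convergence}) actually uses $\alpha=2\sqrt{\kappa^{-1}}\mu(\cdots)^2=\Theta(\mu/\sqrt{\kappa})$, so that $\alpha/L=\Theta(\kappa^{-3/2})$; the factor $\sqrt{\kappa}$ in the proposition's statement appears to be a typo that your sketch inherits.
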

	
	The proof of Proposition \ref{prop:panda_convergence_lower_bound} is provided in Appendix \ref{app:proof_panda}.
	
	One can make sure that the PANDA algorithm can work with step size $c > \alpha$. Although it is interesting to compare Nesterov accelerated method and PANDA with a bigger step size, the analysis, in this case, seems to be complicated and therefore is left for future work.
	
	Analogously to Section \ref{subsec:diging}, let us discuss advantages and disadvantages of the results of this paper in comparison with PANDA (i.e. compare $\lambda_0$ in \eqref{eq:panda_lambda} to $\lambda_1$ in \eqref{eq:nesterov_lambda2}).
	\begin{itemize}
		\item{
			If the objective function is badly conditioned, i.e. $\kappa\gg 1$, and the communication graph is well-conditioned, then Nesterov method outperforms PANDA. On the other hand, if $\kappa\ll\left({\theta_{\max}}/{\theta_{\min}}\right)^{1/4}$, PANDA converges faster.
		}
		\item{
			Analogously to DIGing, PANDA works under weaker assumptions and does not depend on the number of changes in the network.
		}
	\end{itemize}
	
	\vspace{-0.4cm}
	\subsection{Relation to Nesterov method on static network}
	\changes{
		In this section we provide a theoretical comparison between our method and Nesterov method on a static network presented in \cite{Qu2017}.
	}
	\begin{proposition}\label{prop:nesterov_static_comparison}
		\changes{
			The theoretical bound for Algorithm \ref{alg:distributed_nesterov} is better than the bound for distributed Nesterov method in \cite{Qu2017} if and only if
			{\small
				\begin{align*}
					{\left(\lambda_2(1 - \lambda_2)\right)^{3/2}}/{250}\cdot \sqrt{{\lambda_{\max}}/{\lambda_{\min}}} < 
					\left({L_\Phi}/{\mu_\Phi}\right)^{3/14},
				\end{align*}
			}
			where $\lambda_2$ is the second largest eigenvalue of $W$.
		}
	\end{proposition}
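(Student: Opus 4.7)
The plan is to compare two iteration-complexity bounds of the form $N\ge C\log(1/\eps)$, with $C_{\text{our}}$ coming from Theorem~\ref{th:distributed_nesterov} specialised to a static network ($\alpha=0$), and $C_{\text{QL}}$ coming from the convergence rate for Acc-DNGD in~\cite{Qu2017}. Since the logarithmic factor is common to both sides and the assertion is an ``if and only if,'' the argument reduces to showing that $C_{\text{our}}<C_{\text{QL}}$ is equivalent to the stated inequality via a reversible algebraic rearrangement.

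First, Theorem~\ref{th:distributed_nesterov} yields $C_{\text{our}}\propto\sqrt{\kappa}$, and using $\theta_{\max}/\theta_{\min}=(\lambda_{\max}/\lambda_{\min})^{2}$, where $\lambda_{\max}$ and $\lambda_{\min}$ denote the largest and smallest nonzero eigenvalues of $W$, this rewrites as
$$C_{\text{our}}\;\propto\;\sqrt{L_\Phi/\mu_\Phi}\,\cdot\,\sqrt{\lambda_{\max}/\lambda_{\min}}.$$
Next, I would read off from the strongly-convex convergence proof of Acc-DNGD in~\cite{Qu2017}, under Assumption~\ref{as:cumulative_smooth_convex}, the corresponding pre-factor
$$C_{\text{QL}}\;\propto\;250\,\cdot\,(L_\Phi/\mu_\Phi)^{5/7}\,\cdot\,(\lambda_2(1-\lambda_2))^{-3/2},$$
with the numerical constant $250$ pinned down by the step-size tuning and the Lyapunov-function constants in their argument and the exponent $5/7$ coming from the characteristic rate of Acc-DNGD-SC.

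Imposing $C_{\text{our}}<C_{\text{QL}}$, moving all network-dependent factors to the left and all condition-number factors to the right, and using $5/7-1/2 = 3/14$, yields exactly
$$\frac{(\lambda_2(1-\lambda_2))^{3/2}}{250}\cdot\sqrt{\lambda_{\max}/\lambda_{\min}}\;<\;\left(L_\Phi/\mu_\Phi\right)^{3/14},$$
which is the stated inequality. Every step of the rearrangement is an equivalence, so the converse direction is automatic. The main obstacle is not the algebra but the bookkeeping of constants and exponents in~\cite{Qu2017}: once the explicit pre-factor $250\,\kappa_\Phi^{5/7}/(\lambda_2(1-\lambda_2))^{3/2}$ is extracted from their Lyapunov analysis, the equivalence follows by pure monomial arithmetic.
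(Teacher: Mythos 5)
Your proposal is correct and follows essentially the same route as the paper: the paper quotes the Acc-DNGD rate $O\bigl(\bigl(1 - \tfrac{(\lambda_2(1-\lambda_2))^{3/2}}{250}\kappa_\Phi^{-5/7}\bigr)^N\bigr)$ from \cite{Qu2017} as a proposition and compares it to the rate $\bigl(1-1/\sqrt{\kappa_\Phi\chi(W)}\bigr)^N$ from Theorem~\ref{th:distributed_nesterov}, reducing the comparison to the same reversible monomial rearrangement with $5/7-1/2=3/14$. Comparing reciprocal iteration-complexity pre-factors, as you do, is equivalent to the paper's direct comparison of the per-step contraction factors.
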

	
	The proof of Proposition \ref{prop:nesterov_static_comparison} is provided in Appendix \ref{app:proof_static_nesterov}.
	
	\vspace{-0.2cm}
	\section{Numerical experiments}\label{sec:numerical_experiments}
	
	In this section, we present simulation results for the Algorithm~\ref{alg:distributed_nesterov} for the \textit{rigde regression} (strongly convex and smooth) problem. Moreover, we compare its performance with the centralized fast gradient method~\cite{nesterov2013introductory}, DIGing~\cite{Nedic2017achieving}, \changes{\mbox{Acc-DNGD}~\cite{Qu2017},} and PANDA \cite{Maros2018}.
	
	The synthetic \textit{rigde regression}  problem is defined as
	{\small
		\begin{align}\label{eq:regression}
			\min_{z \in \mathbb{R}^m} \frac{1}{2nl}\|b - Hz\|_2^2 + \frac{1}{2}c \|z\|_2^2.
		\end{align}
	}
	Moreover, we seek to solve \eqref{eq:regression} distributedly over a network. Each entry of the data matrix $H \in \mathbb{R}^{nl\times m}$ is generated as an independent identically distributed random variable $H_{ij} \sim \mathcal{N}(0,1)$, the vector of associated values $b \in \mathbb{R}^{nl}$ is generated as a vector of random variables where $b = Hx^* + \epsilon$ for some predefined $x^* \in \mathbb{R}^{m}$ and $\epsilon \sim \mathcal{N}(0,0.1)$. The columns of the data matrix $H$ and the output vector $b$ are evenly distributed among the agents with a total of $l$ data points per agent. The regularization constant is set to $c = 0.1$. Thus, each agent has access to a subset of points such that
	{\small
		\begin{align*}
			b^T & = [\underbrace{b_1^T }_{\text{Agent} \ 1}\mid \cdots \mid \underbrace{b_n^T}_{\text{Agent}  \ n}] \quad \text{and }
			H^T  = [\underbrace{H_1^T }_{\text{Agent} \ 1}\mid \cdots \mid \underbrace{H_n^T}_{\text{Agent}  \ n}],
		\end{align*}
	}
	where $b_i \in \mathbb{R}^l$ and $H_i \in \mathbb{R}^{l\times m}$ for each agent $i\in V$. Therefore, in this setup each agent $i \in V$ has a private local function
	{\small
		\begin{align*}
			\qquad \varphi_i(x_i) \triangleq \frac{1}{2nl}\|b_i - H_ix_i\|_2^2 + \frac{1}{2}\frac{c}{n} \|x_i\|_2^2.
		\end{align*}
	}
	
	\vspace{-0.3cm}
	Moreover, the optimization problem~\ref{eq:regression} is equivalent to
	{\small
		\begin{align*}
			\min_{\sqrt{W}x =0} \sum_{i=1}^{n} \left( \frac{1}{2}\frac{1}{nl}\|b_i - H_ix_i\|_2^2 + \frac{1}{2}\frac{c}{n} \|x_i\|_2^2\right),
		\end{align*}
	}
	where $W = \bar W \otimes I_m$. 
	
	Figure~\ref{fig:results1} shows the numerical results when the network is a sequence Erd\H{o}s-R\'enyi random graphs with $100$ agents and the graph changes at: every step, every $10$ steps, and every $100$ steps. Given that Erd\H{o}s-R\'enyi random graphs condition number scales logarithmically with the number of agents, the changes do not affect the rate of convergence.  \changes{In all three cases, the performance of \mbox{Acc-DNGD} is comparable with the proposed method.} In the next examples we will see how abrupt changes can lead to the instability of the algorithm. 
	
	\begin{figure}[t]
	\centering
\subfigure{\includegraphics[width=0.8\linewidth]{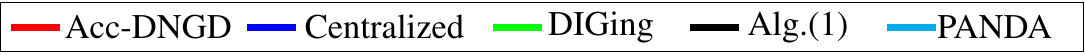}}
\subfigure{\includegraphics[width=0.45\linewidth]{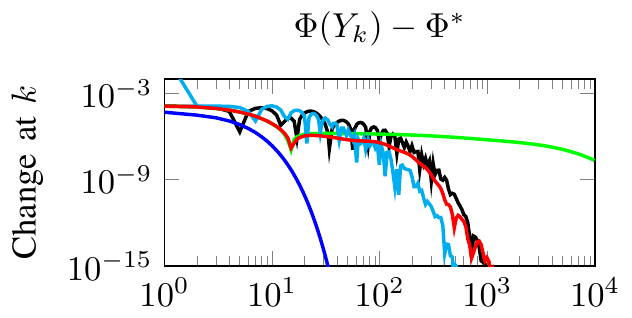}}
\subfigure{\includegraphics[width=0.45\linewidth]{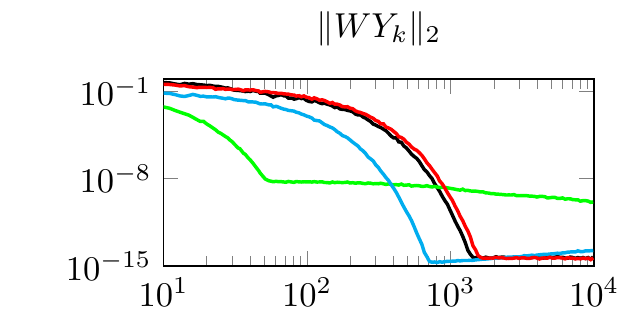}}
\subfigure{\includegraphics[width=0.45\linewidth]{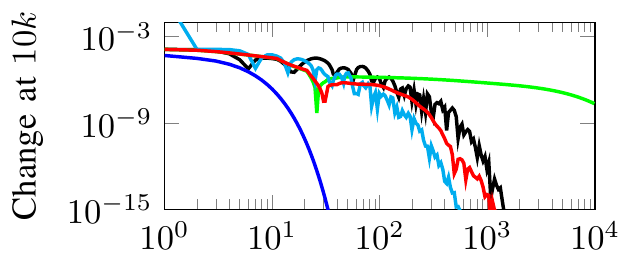}}
\subfigure{\includegraphics[width=0.45\linewidth]{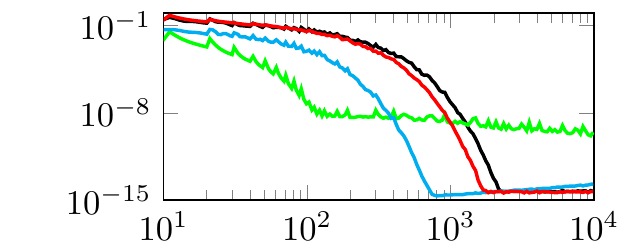}}
\subfigure{\includegraphics[width=0.45\linewidth]{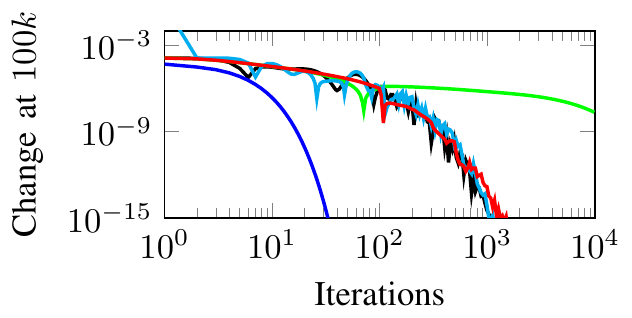}}
\subfigure{\includegraphics[width=0.45\linewidth]{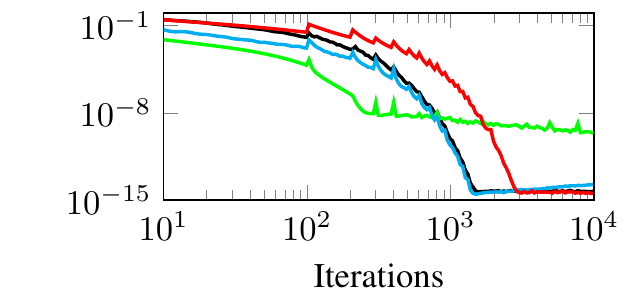}}
	\caption{Distance to optimality and distance to consensus for a sequence of Erd\H{o}s-R\'enyi random graphs with $100$ agents. Top row shows the results for graphs that change at every step, middle row shows the results for changes every $10$ steps, and bottom row shows the results for changes every $100$ steps.}
	\label{fig:results1}
\end{figure} 
	
	Figure~\ref{fig:full_line} shows the numerical results for a sequence of graphs that changes between a complete graph and a path graph every $50$, $100$ or $500$ iterations. Even if the graph changes every $50$ iterations, the convergence is maintained, due to the connectivity of the complete graph. The DIGing algorithm reaches consensus faster, but not on the optimal point, which is slower than other methods. Every time there is a change in the topology there is an increase in the distance to consensus due the the changes in the neighbor sets. When the graph changes every $500$ steps, one can see that after the initial steps as a line graph, the algorithm converges fast once the graph switches to the complete graph. For fast changing graphs PANDA has comparable performance as Alg.~\ref{alg:distributed_nesterov}, however, performance improves as the changes happen less frequently. \changes{For this rapid abrupt change in the network topology, \mbox{Acc-DNGD} convergence faster for rapid changes. However, as the changes frequency decreases, the performance of Algorithm~\ref{alg:distributed_nesterov} is comparable with \mbox{Acc-DNGD}}.
	
	Figure~\ref{fig:star_cycle} shows the numerical results for a sequence of graphs that changes between a \changesm{cycle} and a \changesm{star} graph every $50$, $100$ and $500$ iterations. If the graph changes quickly, every $100$ for this case, Alg.~\ref{alg:distributed_nesterov} diverges, i.e., the proposed accelerated method is not able to keep up with the network changes. This is evident in this case since we are switching between two graphs with relatively large condition number. It is only when the graph changes happen \changesm{rarely} enough, i.e., every $500$ steps, than the proposed method \changesm{converges}. \changes{In this third example, we observe that \mbox{Acc-DNGD} outperforms Algorithm~\ref{alg:distributed_nesterov}, at it maintains convergence even if the network changes rapidly. However, as the changes become infrequent Algorithm~\ref{alg:distributed_nesterov} shows faster convergence.}

	\begin{figure}[t]
	\centering
			\subfigure{\includegraphics[width=0.21\linewidth]{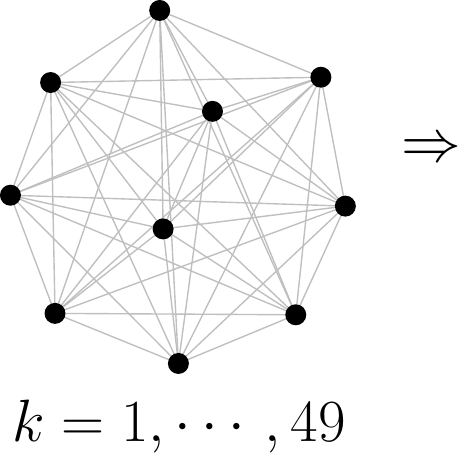}}
	\subfigure{\includegraphics[width=0.22\linewidth]{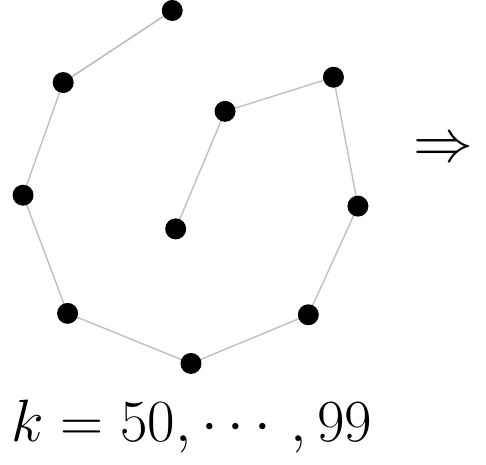}}
	\subfigure{\includegraphics[width=0.23\linewidth]{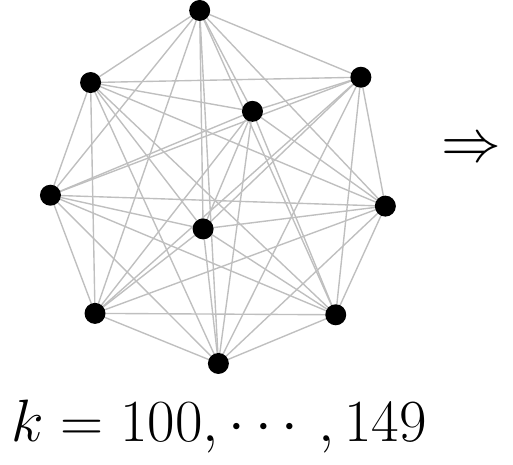}}
	\subfigure{\includegraphics[width=0.8\linewidth]{Figures/header_new}}
	\subfigure{\includegraphics[width=0.45\linewidth]{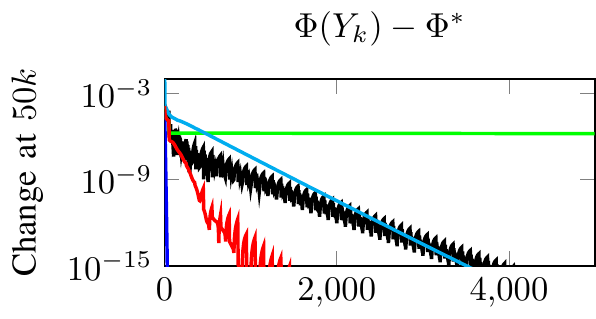}}
	\subfigure{\includegraphics[width=0.45\linewidth]{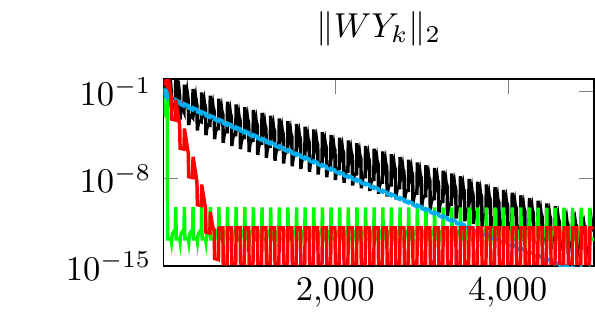}}
	\subfigure{\includegraphics[width=0.45\linewidth]{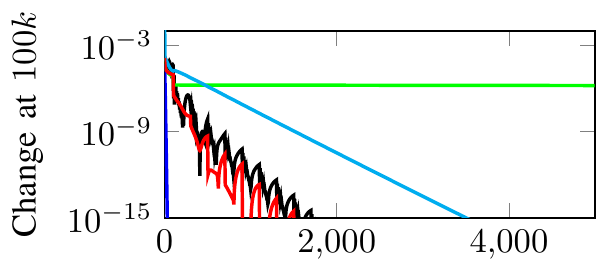}}
	\subfigure{\includegraphics[width=0.45\linewidth]{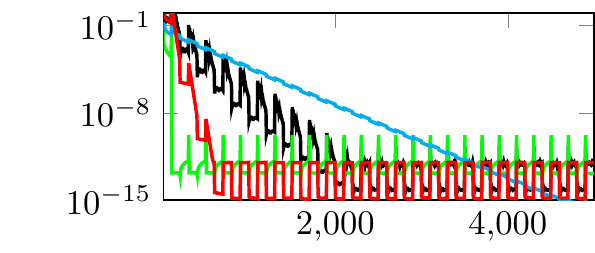}}
	\subfigure{\includegraphics[width=0.45\linewidth]{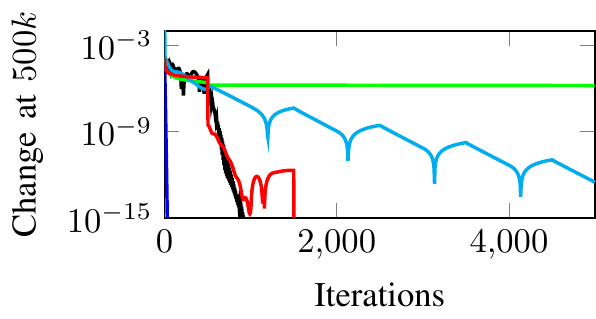}}
	\subfigure{\includegraphics[width=0.45\linewidth]{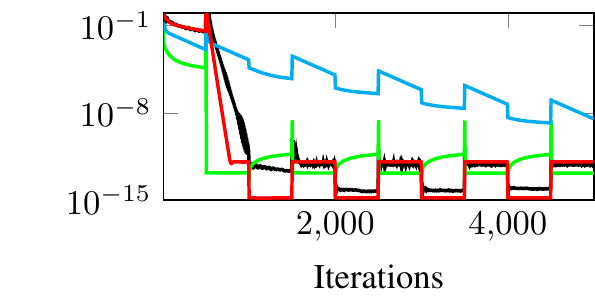}}
	\caption{Distance to optimality and distance to consensus for a network of $100$ Agents on a sequence of graphs that shuffles between a complete graph and a path graph every  $50$ iterations,  $100$ iterations, and $500$ iterations.}
\label{fig:full_line}
\end{figure} 
	\begin{figure}[t]
	\centering
			\subfigure{\includegraphics[width=0.21\linewidth]{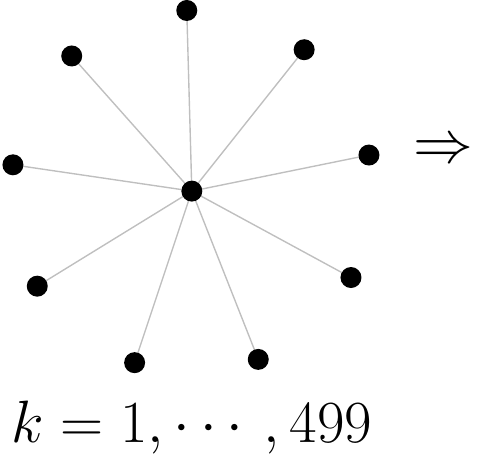}}
	\subfigure{\includegraphics[width=0.22\linewidth]{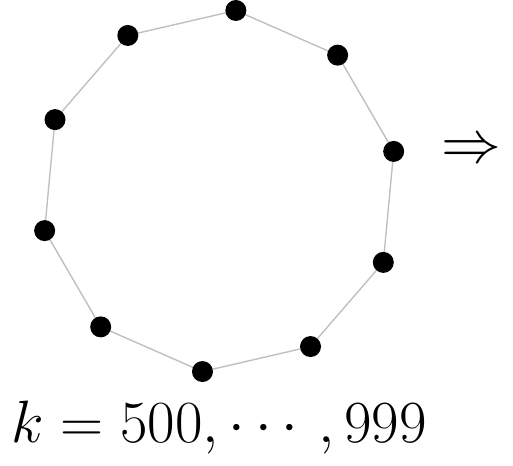}}
	\subfigure{\includegraphics[width=0.23\linewidth]{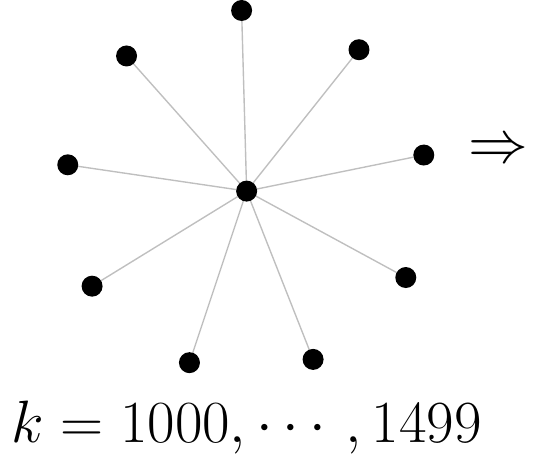}}
\subfigure{\includegraphics[width=0.8\linewidth]{Figures/header_new}}
\subfigure{\includegraphics[width=0.45\linewidth]{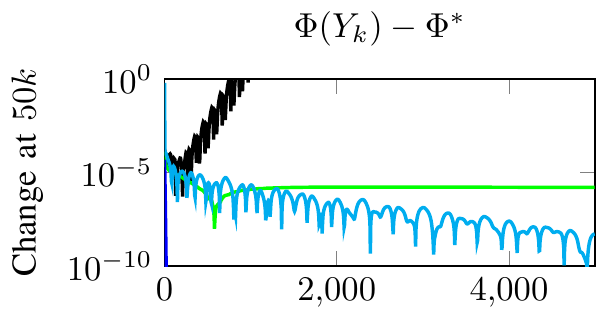}}
\subfigure{\includegraphics[width=0.45\linewidth]{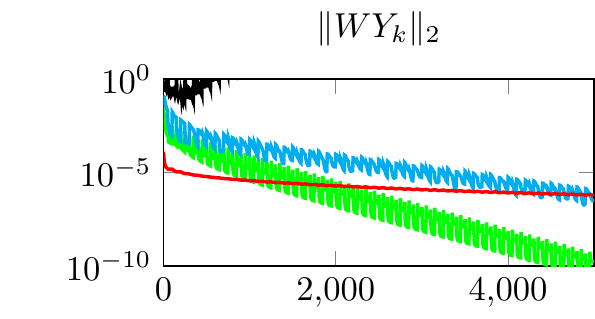}}
\subfigure{\includegraphics[width=0.45\linewidth]{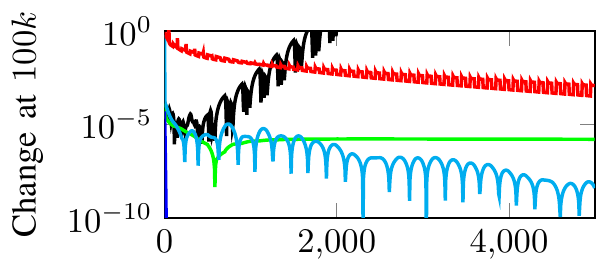}}
\subfigure{\includegraphics[width=0.45\linewidth]{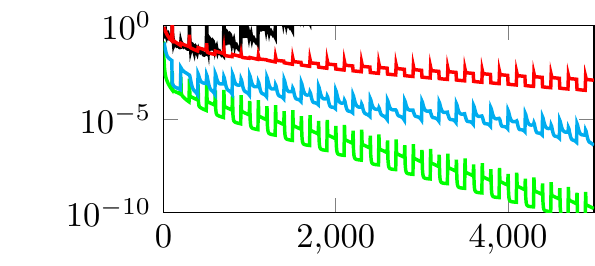}}
\subfigure{\includegraphics[width=0.45\linewidth]{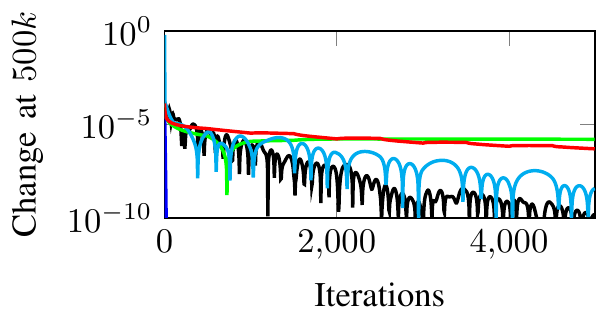}}
\subfigure{\includegraphics[width=0.45\linewidth]{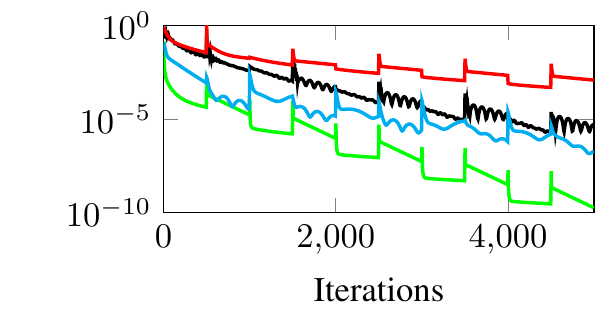}}
	\caption{Distance to optimality and distance to consensus for a network of $100$ Agents on a sequence of graphs that shuffles between a star graph and a cycle graph every $50$ iterations, $100$ iterations, and $500$ iterations.}
\label{fig:star_cycle}
\end{figure} 
	
	\changesr{Appendix~\ref{new_simulations} provides additional simulations of the proposed algorithm for the problem of regularized logistic regression for training linear classifiers. , i.e.,
		{\small
			\begin{align}\label{fun:logistic}
				\min_{x\in \mathbb{R}^m} \frac{1}{2nl} \sum_{i=1}^{nl} \log \left(1 + \exp\left(-y_i \cdot A_i^Tx\right)\right) + \frac{1}{2}c \|x\|_2^2,
			\end{align}
		}
		where $A_i \in \mathbb{R}^d$ is a data point with $y_i \in \{-1,1\}$ as its corresponding class assignment.
		We applied the proposed method to datasets from the library \textsc{LibSVM}~\cite{Chang2011}. We seek to distributedly solve the logistic regression problem over the following datasets: \textsc{a9a}, \textsc{mushrooms}, \textsc{ijcnn1} and \textsc{covtype}.
	}
	
	\section{Conclusions and future work}\label{sec:conclusion}
	
	We study the convergence of gradient descent, and Nesterov accelerated method on time-varying networks. We theoretically prove and empirically illustrate that these methods are linearly convergent under strong convexity and smoothness of the objective function and specific assumptions on the network structure. Nesterov accelerated  method performs better in terms of the objective condition number than other methods in the literature. However, the number of required iterations grows linearly with the number of changes in the network, while other algorithms' performance does not depend on how often the network changes.
	
	The rate ${1}/{(1+\gamma)^N}$ can be improved to $\prod_{i=1}^T(1+\gamma_i)^{-1}$, where $\gamma_i = {1}/{(\sqrt{\kappa_i} - 1)}$. This would be a better bound, because not all of the functions $f_k(x)$ may be badly conditioned with $\kappa={L}/{\mu}$. Also, the convergence rate of the accelerated gradient method may be improved by using restarts, which is left for future work. \changes{Other accelerating schemes such as the use of Chebyshev accelerations require further work~\cite{sca17}.} \changesr{Finally, note that we provide only a sufficient condition on the number of changes in the network under which we can guarantee an optimal convergence rate (up to logarithmic terms). Whether this is also a necessary condition remains an open question that we will address in future work. }
	
	\appendices
	\section{Proof of Theorem \ref{th:primal_smooth}}\label{proof_smooth}
	
	\begin{proof}
		The proof bases on the connection of strong convexity and smoothness of a function and its conjugate (Lemma \ref{lem:kakade_dual_smoothness}).
		
		1) First, we show that the dual norm to $\|\cdot\|_F$ is $\|\cdot\|_F$ itself.
		{\small
			\begin{align*}
				\|Y\|_{F*} = \sup_{X\in\RR^{d\times n}}\Big(\langle X, Y\rangle: \|X\|_F\le 1\Big)
			\end{align*}
		}
		Note that $\langle X, Y\rangle\le \|X\|_F \|Y\|_F \le 1\cdot \|Y\|_F$ by Cauchy-Schwarz and $\langle\frac{Y}{\|Y\|_F}, Y\rangle = \|Y\|_F$. Thus, $\|Y\|_{F*} = \|Y\|_F$.
		
		\smallskip
		
		2) Second, let $X\in\RR^{d\times n}, A\in\RR^{n\times k}, B\in\RR^{m\times d}$ and denote $\|X\|_{op}$ the operator norm of $X$ generated by $\|\cdot\|_2$ in $\RR^n$, i.e. $\|X\|_{op} = \sup_{y\in\RR^n\setminus\{0\}}\frac{\|Xy\|_2}{\|y\|_2}$, where $\|\cdot\|_2$ is the euclidean norm in $\RR^d$. 
		Then $
		\|XA\|_F\le \|X\|_{op}\cdot\|A\|_F$, and $ \|BX\|_F\le \|X\|_{op}\cdot\|B\|_F
		$.
		
		Denote $a_1, ..., a_k$ the columns of $A$.
		
		\vspace{-0.4cm}
		{\small
			\begin{align*}
				\|XA\|_F^2 
				&= \|X(a_1...a_k)\|_F^2 = 
				\|(Xa_1\ Xa_2...Xa_k)\|_F^2 \\
				&= \|Xa_1\|_F^2 + ... + \|Xa_k\|_F^2 \\
				&\le \|X\|_{op}^2\cdot\|a_1\|_2^2 + ... + \|X\|_{op}^2\cdot \|a_k\|_2^2 \\
				&= \|X_{op}\|^2\cdot \|A\|_F^2
			\end{align*}
		}
		
		The inequality $\|BX\|_F\le \|X\|_{op}\cdot\|B\|_F$ is proved analogically.
		
		\smallskip
		
		3) Third, we show the smoothness of $f(X)$.
		{\small
			\begin{align*}
				f(X) 
				&= \max_{Y\in\RR^{d\times n}}\Big[-\Psi(Y) - \langle Y, X\sqrt W\rangle\Big] \\
				&= \max_{Y\in\RR^{d\times n}}\Big[\langle Y, -X\sqrt W\rangle - \Psi(Y)\Big] = 
				\Psi^*(-X\sqrt W)
			\end{align*}
		}
		
		The function $\Psi(Y)$ is $\mu_\Psi$-strongly convex w.r.t. $\|\cdot\|_F$, and thus $\Psi^*(Z)$ is $({1}/{\mu_\Psi})$-smooth w.r.t. $\|\cdot\|_F$ by Lemma \ref{lem:kakade_dual_smoothness}. It remains to show that $f(X) = \Psi^*(-X\sqrt W)$ is $L_f = -{\sqrt{\sigma_{\max}(W)}}/{(\mu_\Psi)}$-smooth, thus
		{\small
			\begin{align*}
				df(X) &= \langle\nabla\Psi^*(-X\sqrt W), -dX\sqrt W\rangle \\
				&= \langle-\nabla\Psi^*(-X\sqrt W)\sqrt W, d\changesm{X}\rangle \\
				\nabla f(X) &= -\nabla\Psi^*(-X\sqrt W)\sqrt W \numberthis.\label{eq:dual_gradient}
			\end{align*}
		}
		
		\vspace{-0.4cm}
		Now when the gradient is computed, we explicitly show that it is Lipschitz with constant $L_f$.
		{\small
			\begin{align*}
				&\|\nabla f(X_2) - \nabla f(X_1)\|_F \\
				&\quad\le \|\sqrt W\|_{op}\cdot \|\nabla\Psi^*(-X_1 \sqrt W) - \nabla\Psi^*(-X_2 \sqrt W)\|_F  \\
				&\quad\le \|\sqrt W\|_{op}\cdot\frac{1}{\mu_\Psi} \|(X_1 - X_2)\sqrt W\|_F \\
				&\quad\changes{\le} \frac{\|\sqrt W\|_{op}^2}{\mu_\Psi}\cdot \|X_1 - X_2\|_F  = \frac{\sqrt{\sigma_{\max}(W)}}{\mu_\Psi} \|X_1 - X_2\|_F.
			\end{align*}
		}
		
		4) Finally, we prove the strong convexity of $f(X)$. It is sufficient to show
		{\small
			\begin{align*}
				f(X+dX) - f(X) \ge 
				\langle\nabla f(X), dX\rangle + \frac{\mu_f}{2} \|dX\|_F^2.
			\end{align*}
		}
		
		Keeping in mind that $f(X) = \Psi^*(-X\sqrt W)$ and $\nabla f(X) = -\nabla\Psi^*(-X\sqrt W)\sqrt W$, we obtain
		{\small
			\begin{align*}
				&f(X+dX) - f(X) \\
				&\quad= \Psi^*(-(X+dX)\sqrt W) - \Psi^*(-X\sqrt W) \\
				&\quad\ge \langle\nabla\Psi^*(-X\sqrt W), -dX \sqrt W\rangle + {1}/{(2L_\Psi)}  \|dX \sqrt W\|_F^2 \\
				&\quad=\langle -\nabla\Psi^*(-X\sqrt W)\sqrt W, dX\rangle + {1}/{(2L_\Psi)} \|dX \sqrt W\|_F^2 \\
				&\quad=\langle\nabla f(X), dX\rangle + {1}/{(2L_\Psi)} \|dX \sqrt W\|_F^2.
			\end{align*}
		}
		\changesm{Noting that $\|dX \sqrt W\|_F^2\ge \|dX\|_F^2\cdot\sqrt{\tilde\sigma_{\min}(W)}$ concludes the proof}.
		
	\end{proof}

	\section{Proof of Proposition \ref{prop:diging_convergence_lower_bound}}\label{app:proof_diging}
	
	We will need the original result for DIGing obtained in \cite{Nedic2017achieving}:
	
	\begin{proposition}\label{prop:diging_convergence}
		Let assumptions \ref{as:mixing_matrix_sequence} and \ref{as:every_smooth_convex} hold. Denote $\ol\mu = {1}/{n}\sum_{i=1}^n \mu_i,\ \ol{\kappa} = {1}/{n}\sum_{i=1}^n{L_i}/{\mu_i}$ and $J = 3\sqrt{\ol\kappa}B^2 (1 + 4\sqrt{n}\sqrt{\ol{\kappa}})$. Then the DIGing algorithm \cite{Nedic2017achieving} generates a sequence $\{x_k\}$ such that $
		\|x_N - x^*\| = O(\lambda^N)
		$, where $\lambda$ is defined as
		{\small
			\begin{align}\label{eq:diging_lambda2}
				\lambda = 
				\begin{cases}
					\sqrt[2B]{1 - \frac{\alpha\ol{\mu}}{1.5}}, &\quad \text{if}\ \alpha\in \left(0, \alpha_0\right] \\
					\sqrt[B]{\sqrt{\frac{\alpha\ol\mu J}{1.5}}, + \delta}& \quad \text{if}\ \alpha\in\left(\alpha_0, \frac{1.5(1-\delta)^2}{\ol\mu J}\right]
				\end{cases}
			\end{align}
		}
		where $\alpha_0 = {1.5\left(\sqrt{J^2 + (1-\delta^2)J} - \delta J\right)^2}/{(\ol\mu J(J+1)^2)}$.
	\end{proposition}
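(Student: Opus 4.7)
The plan is to exploit the explicit two-branch convergence rate stated in Proposition \ref{prop:diging_convergence} (from the appendix, taken from \cite{Nedic2017achieving}) and show that, even after optimizing over the step size $\alpha$, the best rate the theorem can certify is no faster than $\lambda_0^N$.

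First, I would argue by monotonicity that the optimal rate is attained at the interior boundary $\alpha=\alpha_0$. Indeed, on the admissible range $(0,\,1.5(1-\delta)^2/(\ol\mu J)]$, the branch $\lambda=(1-\alpha\ol\mu/1.5)^{1/(2B)}$ is strictly decreasing in $\alpha$, while the branch $\lambda=(\sqrt{\alpha\ol\mu J/1.5}+\delta)^{1/B}$ is strictly increasing in $\alpha$. By the continuity of $\lambda(\alpha)$, $\min_\alpha \lambda(\alpha)=\lambda(\alpha_0)$. It therefore suffices to lower bound $\lambda(\alpha_0)$.

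Second, I would substitute $B=1$ (justified by the remark preceding the proposition stating that $V(k)=I_n-W(k)/n$ satisfies Assumption~\ref{as:mixing_matrix_sequence} with $B=1$), so that $\lambda(\alpha_0)^2=1-\alpha_0\ol\mu/1.5$. Plugging in the explicit formula for $\alpha_0$ and rationalizing the numerator $\sqrt{J^2+(1-\delta^2)J}-\delta J$ via multiplication by its conjugate produces the clean identity
\begin{align*}
1-\lambda(\alpha_0)^2 \;=\; \frac{\bigl(\sqrt{J^2+(1-\delta^2)J}-\delta J\bigr)^2}{J(J+1)^2}.
\end{align*}
From the trivial bound $\sqrt{J^2+(1-\delta^2)J}-\delta J\le\sqrt{J^2+(1-\delta^2)J}\le\sqrt{J(J+1)}$, one obtains $1-\lambda(\alpha_0)^2\le 1/(J+1)$, and since $1-\lambda(\alpha_0)\le 1-\lambda(\alpha_0)^2$, also $1-\lambda(\alpha_0)\le 1/(J+1)$.

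Third, I would expand $J=3\sqrt{\ol\kappa}(1+4\sqrt{n\ol\kappa})$ and carry the estimate through to get a bound of the form $1-\lambda(\alpha_0)\le 1/(12\ol\kappa^{3/2}\sqrt{n})$, i.e.\ $\lambda(\alpha_0)\ge\lambda_0$. The main obstacle lies in reconciling the constant and exponent with the target $\ol\kappa^{3/2}\sqrt{n}$: a direct substitution of $J\ge 12\ol\kappa\sqrt{n}$ into $1/(J+1)$ only yields the weaker bound $1-\lambda\le 1/(12\ol\kappa\sqrt{n})$, short by a factor of $\sqrt{\ol\kappa}$. Closing this gap is the principal technical difficulty, and I expect it to require either a tighter analysis of $\lambda(\alpha_0)$ that retains the $(1-\delta^2)$ factor in the numerator (rather than discarding it in favor of $J(J+1)$) combined with a bound relating the spectral gap $1-\delta$ of $V(k)=I_n-W(k)/n$ to $\ol\kappa$, or a direct computation using the second-branch formula $\lambda(\alpha_0)=(\sqrt{J^2+(1-\delta^2)J}+\delta J\cdot\text{(something)})/(J+1)$ evaluated with the exact expression for $\alpha_0$. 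Once the constant is settled, the rest of the argument is purely mechanical algebra.
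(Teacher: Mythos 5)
Your proposal does not prove the statement it is meant to prove. Proposition~\ref{prop:diging_convergence} is the convergence guarantee for the DIGing iterates themselves: that under Assumptions~\ref{as:mixing_matrix_sequence} and \ref{as:every_smooth_convex} the algorithm produces $\|x_N-x^*\|=O(\lambda^N)$ with the stated two-branch dependence of $\lambda$ on the step size $\alpha$. In the paper this is an imported result --- it is quoted from \cite{Nedic2017achieving} (where it rests on the small-gain analysis of the coupled consensus/gradient-tracking error system) and is not re-derived; a self-contained proof would have to set up the DIGing recursion, the gradient-tracking error dynamics, and the contraction argument under the $B$-step joint spectrum property, none of which appears in your sketch. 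Instead, you take Proposition~\ref{prop:diging_convergence} as given and argue that the certified rate can never beat $\lambda_0=1-1/(12\ol\kappa^{3/2}\sqrt{n})$. That is the content of a different statement, Proposition~\ref{prop:diging_convergence_lower_bound}, so as a proof of Proposition~\ref{prop:diging_convergence} your argument is circular: its first step is to assume the conclusion.

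As a secondary remark, even read as a proof of Proposition~\ref{prop:diging_convergence_lower_bound}, your sketch departs from the paper's Appendix~\ref{app:proof_diging}: the paper does not optimize over $\alpha$ or argue monotonicity of the two branches, it simply bounds each branch separately, using $\alpha\le 1.5/(\ol\mu(J+1))$ on the first branch and the explicit form of $\alpha_0$ (after the conjugate-rationalization you describe) on the second, concluding $\lambda\ge 1-1/(J+1)$ in both cases. The ``missing factor of $\sqrt{\ol\kappa}$'' that you flag as the principal difficulty is real if one uses $J=3\sqrt{\ol\kappa}B^2(1+4\sqrt{n\ol\kappa})$ as printed in the proposition (this only gives $J\ge 12\ol\kappa\sqrt{n}$); the paper's own computation silently uses $J=3\ol\kappa(1+4\sqrt{n\ol\kappa})$, for which $J+1\ge 12\ol\kappa^{3/2}\sqrt{n}$ is immediate. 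So the obstruction you identify is a typographical inconsistency in the constant $J$, not something to be closed by retaining the $(1-\delta^2)$ term or by relating $\delta$ to $\ol\kappa$. None of this, however, repairs the primary problem that the statement actually at issue is left unproved.
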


	\begin{proof}[Proof of Proposition \ref{prop:diging_convergence_lower_bound}]
		
		Let us consider the two cases: $\alpha\in(0, \alpha_0]$ and $\alpha\in(\alpha_0, {1.5(1-\delta)^2}/{(\ol\mu J)}]$.
		
		1) $\alpha\in(0, \alpha_0]$.
		
		\vspace{-0.4cm}
		{\small
			\begin{align*}
				&\alpha\le 
				\frac{1.5(\sqrt{J^2+J})^2}{\ol\mu J(J+1)^2} = 
				\frac{1.5}{\ol\mu (J+1)} \\
				&\lambda = 
				\sqrt{1 - \frac{\alpha\mu}{1.5}} \le 
				\sqrt{1 - \frac{\ol\mu}{1.5}\frac{1.5}{\ol\mu(J+1)}} \\
				&\ge\sqrt{1 - \frac{1}{3\ol\kappa(1 + 4\sqrt{n}\sqrt{\ol\kappa}) + 1}} \ge 
				\sqrt{1 - \frac{1}{12\ol\kappa^{3/2}\sqrt{n}}} \\
				&\ge 1 - {1}/({12\ol\kappa^{3/2}\sqrt{n}})
			\end{align*}
		}
		
		2) $\alpha\in(\alpha_0, \frac{1.5(1-\delta)^2}{\ol\mu J}]$.
		
		\vspace{-0.2cm}
		{\small
			\begin{align*}
				\lambda 
				&\ge \delta + \sqrt{\frac{\ol\mu J}{1.5}\cdot 1.5\frac{\left(\sqrt{J^2 + (1-\delta^2)J} - \delta J\right)^2}{\ol\mu J(J+1)^2}} \\
				&= \delta + \frac{\sqrt{J^2 + (1-\delta^2)J} - \delta J}{J+1} 
				= \frac{\sqrt{J^2 + (1-\delta^2)J} + \delta}{J+1} \\
				&\overset{\circledOne}{\ge} \frac{\sqrt{J^2}}{J+1} = 
				1 - \frac{1}{J+1} \ge 
				1 - \frac{1}{J}
				\ge 1 - \frac{1}{12\ol\kappa^{3/2}\sqrt{n}}, 
			\end{align*}
		}
		where $\circledOne$ is because $0\le\delta\le 1$. In both cases, $\lambda\ge\lambda_0$.
	\end{proof}

	\section{Proof of Proposition \ref{prop:panda_convergence_lower_bound}}\label{app:proof_panda}
	
	The original result for PANDA in \cite{Maros2018} states that
	\begin{proposition}\label{prop:panda_convergence}
		Under assumptions \ref{as:mixing_matrix_sequence} and \ref{as:cumulative_smooth_convex} the convergence rate of PANDA with step size $c$ is $O(\lambda^k)$, where
		{\small
			\begin{align*}
				\lambda &= \sqrt[2B]{1 - \frac{c}{2L}}, \ c\in(0, \alpha], \text{ and } \\
				\alpha &= 2\sqrt{\kappa^{-1}}\mu\left(\frac{\sqrt{(1-\delta^2)\kappa^{-2/3} + 8} - 8\delta}{\kappa^{-3/2} + 8}\right)^2
			\end{align*}
		}
	\end{proposition}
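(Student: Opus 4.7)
The plan is to follow a Lyapunov-style contraction argument for the dual-based PANDA iteration, mirroring the standard template used for inexact gradient/consensus methods on time-varying graphs. First, I would write PANDA explicitly as a dual ascent combined with a $B$-step consensus step on the mixing matrix sequence $\{V(k)\}$. Since $\varphi$ is $\mu$-strongly convex and $L$-smooth (Assumption \ref{as:cumulative_smooth_convex}), Lemma \ref{lem:kakade_dual_smoothness} gives that the dual is $1/\mu$-smooth and $1/L$-strongly convex (on the relevant subspace), which is what supplies the $1-c/(2L)$ contraction factor in the per-step dual progress.

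Next, I would introduce a composite error measure of the form
\begin{align*}
\Psi_k = \|x_k - x^*\|^2 + \eta\,\|\text{consensus error at step } k\|^2,
\end{align*}
and track it across a window of $B$ consecutive iterations. Over one inner iteration the dual ascent contributes a factor $(1-c/(2L))$ to the optimality part (standard smooth strongly convex analysis applied to the dual), while over a window of $B$ steps the consensus part is contracted by $\delta<1$ thanks to the joint spectrum property in Assumption \ref{as:mixing_matrix_sequence}(3). Combining these two effects and taking the $2B$-th root produces the stated rate $\lambda = \sqrt[2B]{1-c/(2L)}$.

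The key technical step — and the main obstacle — is the admissible step-size bound $c \le \alpha$. The threshold $\alpha$ must be chosen so that (i) the cross-terms arising when one couples the dual-gradient inequality with the inexactness introduced by imperfect consensus do not overwhelm the $c/(2L)$ gain, and (ii) the resulting two-by-two matrix recursion on $(\text{optimality gap}, \text{consensus error})$ is contractive. Writing that matrix recursion and demanding that its spectral radius be at most $1-c/(2L)$ reduces to a quadratic inequality in $\sqrt{c}$ whose coefficients depend on $\mu$, $L$ (i.e.\ $\kappa$) and $\delta$. Solving this quadratic and isolating the positive root yields the closed-form
\begin{align*}
\alpha = 2\sqrt{\kappa^{-1}}\,\mu\!\left(\frac{\sqrt{(1-\delta^2)\kappa^{-2/3}+8}-8\delta}{\kappa^{-3/2}+8}\right)^{\!2},
\end{align*}
which is exactly the stated expression.

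Finally, having established that $\Psi_{k+B}\le \lambda^{2B}\,\Psi_k$ for all $k$, a routine unrolling gives $\|x_k-x^*\|^2 = O(\lambda^{2k})$ and hence $\|x_k-x^*\|=O(\lambda^k)$ as claimed. Since this proposition is quoted verbatim from \cite{Maros2018}, the remaining work is essentially verifying that the Lyapunov recursion there is correctly solved; I expect the most delicate computation to be the step-size quadratic leading to the exact form of $\alpha$, which is where the $(1-\delta^2)$ and the constants $8$ arise from balancing the consensus contraction against the dual-gradient step.
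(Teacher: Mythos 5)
The paper does not prove this proposition at all: it is introduced verbatim as ``The original result for PANDA in \cite{Maros2018}'' and serves only as the imported hypothesis for Proposition \ref{prop:panda_convergence_lower_bound}, whose proof merely lower-bounds the quoted $\lambda$ by $\lambda_0$. So there is no in-paper argument to compare yours against; the actual proof lives in \cite{Maros2018}.

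As a reconstruction of that external proof, your sketch is plausible in outline --- a coupled recursion between the dual optimality gap and the consensus error, with the first contracted via strong convexity/smoothness of the dual (Lemma \ref{lem:kakade_dual_smoothness}) and the second contracted by $\delta$ over windows of length $B$ via the joint spectrum property --- but it is not a proof. The decisive step, namely that forcing the spectral radius of the coupled recursion below $1 - c/(2L)$ reduces to a quadratic in $\sqrt{c}$ whose admissible root is exactly the stated $\alpha$, is asserted rather than carried out; none of the specific constants (the $8$'s, the $(1-\delta^2)$ factor, or the mismatched exponents $\kappa^{-2/3}$ inside the square root versus $\kappa^{-3/2}$ in the denominator, which already look suspicious as quoted) is derived or checked. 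If the goal were to certify the statement rather than cite it, that computation is the entire content and it is missing; if the goal is only to use it, as the paper does, the correct move is simply to quote \cite{Maros2018} and proceed to the lower bound on $\lambda$.
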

	
	\begin{proof}[Proof of Proposition \ref{prop:panda_convergence_lower_bound}]
		It suffices to show that $\lambda\ge\lambda_0$.
		
		\vspace{-0.4cm}
		{\small
			\begin{align*}
				\alpha
				&\overset{\circledOne}{\le}
				2\sqrt{\kappa^{-1}}\mu \left(\frac{\sqrt{\kappa^{-2/3} + 8}}{\kappa^{-3/2} + 8}\right)^2 \le 
				2\sqrt{\kappa^{-1}}\mu\cdot \left(\frac{\sqrt{8 + 1}}{8 + 0}\right)^2 \\
				&= \frac{9}{32}\sqrt{\kappa^{-1}}\mu \\
				\lambda 
				&\ge \sqrt{1 - \frac{\alpha}{2L}} \ge 
				\sqrt{1 - \frac{9}{32}\sqrt{\kappa^{-1}}\mu\cdot\frac{1}{2L}} = 
				\sqrt{1 - \frac{9}{64}\kappa^{-3/2}} \\ 
				&\overset{\circledTwo}{\ge} 
				1 - \frac{9}{64}\frac{1}{\kappa^{3/2}} = 
				\lambda_0.
			\end{align*}
		}
		
		Here $\circledOne$ is because $\delta\ge 0$ due to its definition in Assumption \ref{as:mixing_matrix_sequence} and $\circledTwo$ is since $\sqrt{z}\ge z$ for all $z\in[0, 1]$.
	\end{proof}	
	
	\section{\changes{Proof of Proposition \ref{prop:nesterov_static_comparison}}}\label{app:proof_static_nesterov}
	
	\begin{proposition}\cite[Theorem~$3$]{Qu2017}
		\changes{
			Nesterov method on static networks has a convergence rate of $O((1 - C({\mu_{\Phi}}/{L_{\Phi}})^{5/7})^N)$, where $C = {\left(\lambda_2(1 - \lambda_2)\right)^{3/2}}/{250}$ and $\lambda_2$ is the second largest eigenvalue of $W$.
		}
	\end{proposition}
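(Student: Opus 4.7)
The statement is a verbatim restatement of Theorem 3 in \cite{Qu2017}, so my plan is not to produce an original proof but to indicate that the result is directly imported from that reference and to sketch the structure of the argument used there for completeness. First I would fix notation: the ambient problem is \eqref{eq:initial_problem_0} on a fixed connected graph whose doubly stochastic mixing matrix $W$ has eigenvalues $1=\lambda_1>\lambda_2\ge\cdots\ge\lambda_n>-1$, and the algorithm of interest is the accelerated distributed Nesterov gradient descent (Acc-DNGD) with gradient tracking from \cite{Qu2017}.

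The key steps in the proof of \cite[Theorem~3]{Qu2017} are, in order: (i) rewrite the Acc-DNGD iterates in terms of the mean vector $\bar x_k$ and the disagreement vector $x_k - \mathbf{1}\bar x_k^\top$; (ii) show that the mean vector obeys an inexact Nesterov recursion driven by a perturbation whose size is controlled by the consensus error; (iii) bound the consensus error using the spectral gap $1-\lambda_2$ of $W$ together with $L_\Phi$-smoothness of $\Phi$; and (iv) combine these into a single Lyapunov function $V_k$ mixing the primal optimality gap $\varphi(\bar x_k)-\varphi^*$, the tracking error, and the consensus error. A careful choice of the step size and the momentum parameter, tuned so that the consensus contraction factor $1-\lambda_2$ balances against the accelerated primal contraction, then yields $V_{k+1}\le (1-C(\mu_\Phi/L_\Phi)^{5/7})V_k$ with $C=(\lambda_2(1-\lambda_2))^{3/2}/250$, from which the claimed geometric rate follows.

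The delicate part of the argument in \cite{Qu2017}, and the one I would expect to be the hardest to reproduce from scratch, is the simultaneous choice of step size and momentum that produces the fractional exponent $5/7$: because the consensus error feeds back into the perturbation of Nesterov's iteration, the usual $\sqrt{\kappa}$ rate of centralized acceleration is degraded, and the exponent $5/7$ arises from optimizing a three-way trade-off between the primal contraction, the gradient-tracking drift, and the mixing contraction $1-\lambda_2$. Since this balancing has already been carried out in detail in \cite[Section~4]{Qu2017}, the present Proposition is completed simply by invoking that analysis; no additional argument is required here.
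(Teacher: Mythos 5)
Your proposal matches the paper exactly: the paper offers no proof of this proposition, importing it verbatim as \cite[Theorem~3]{Qu2017} and immediately using it to derive Proposition~\ref{prop:nesterov_static_comparison}, which is precisely the "invoke the reference, no additional argument required" treatment you give. Your sketch of the Lyapunov/consensus-error argument behind the $5/7$ exponent is a reasonable (and correct in spirit) summary of the cited analysis, but it is supplementary rather than a departure from the paper's approach.
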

	
	\changes{
		\begin{proof}[Proof of Proposition \ref{prop:nesterov_static_comparison}]
			Recall the notations $\kappa_{\Phi} = {L_\Phi}/{\mu_{\Phi}}$ and $\chi(W) = {\lambda_{\max}}/{\lambda_{\min}}$.
			{\small
				\begin{align*}
					\frac{\left(\lambda_2(1 - \lambda_2)\right)^{3/2}}{250}\cdot \sqrt{\frac{\lambda_{\max}}{\lambda_{\min}}} &<
					\left(\frac{L_\Phi}{\mu_\Phi}\right)^{3/14} \\
					\frac{\left(\lambda_2(1 - \lambda_2)\right)^{3/2}}{250} &<
					\kappa_{\Phi}^{3/14}\cdot \left(\chi(W)\right)^{-1/2} \\
					\frac{\left(\lambda_2(1 - \lambda_2)\right)^{3/2}}{250} \kappa_{\Phi}^{-5/7} &< 
					\left(\kappa_{\Phi}\chi(W)\right)^{-1/2} \\
					\left(1 - \frac{\left(\lambda_2(1 - \lambda_2)\right)^{3/2}}{250} \kappa_{\Phi}^{-5/7}\right)^N &>
					\left(1 - \frac{1}{\sqrt{\kappa_{\Phi}\chi(W)}}\right)^N
				\end{align*}
			}
		\end{proof}
	}
	
	\vspace{-0.5cm}
	\section{\changesm{Logistic regression simulations}}\label{new_simulations}
	
	\changesr{We assume there is a total of $nl$ data points distributed evenly among $n$ agents, where each agent holds $l$ data points. Note that each of the agents in the network will have a local function
		\vspace{-0.2cm}
		{\small
			\begin{align*}
				\varphi_i(x) & = \frac{1}{2nl} \sum_{j=1}^{l} \log \left(1 + \exp\left(-[y^i]_j \cdot [A^i]_j^Tx\right)\right) + \frac{1}{2n}c \|x\|_2^2,
			\end{align*} 
		}
		\noindent where $A^j \in \mathbb{R}^{l\times m }$ and $y^j \in \{-1,1\}^l$ are the data points held by agent $j$ and their corresponding class assignments.
	}
	
	\begin{figure}[htbp!]
	\centering
\subfigure{\includegraphics[width=0.8\linewidth]{Figures/header_new.pdf}}
\subfigure{\includegraphics[width=0.46\linewidth]{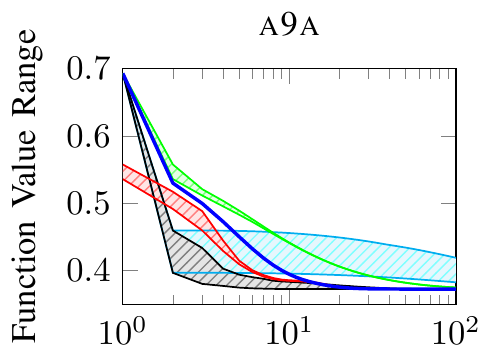}}
\subfigure{\includegraphics[width=0.46\linewidth]{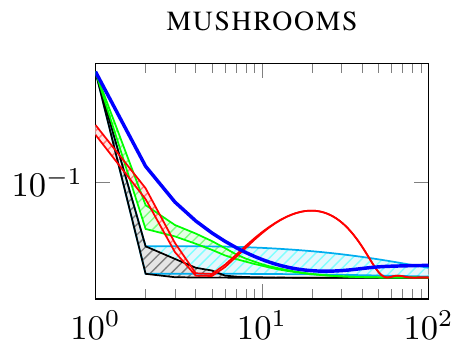}}
\subfigure{\includegraphics[width=0.46\linewidth]{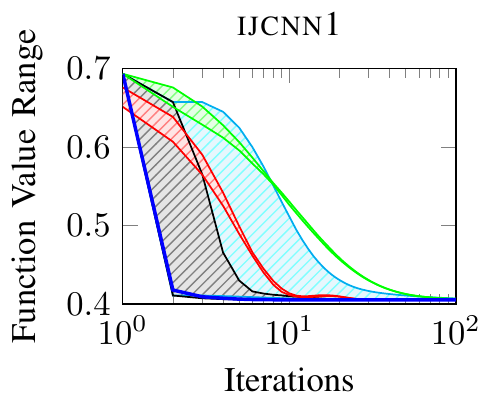}}
\subfigure{\includegraphics[width=0.46\linewidth]{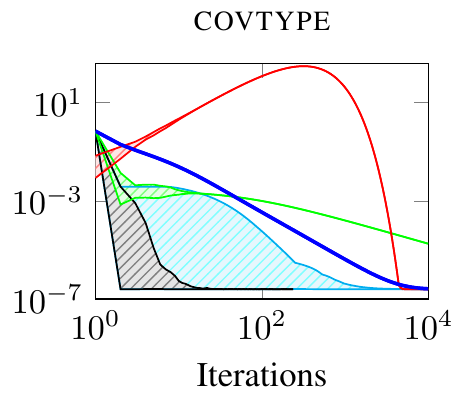}}
	\caption{Function value range achieved by the studied distributed methods for the distributed regularized logistic regression problem. The width of of each band represents the area between the maximum and minimum value achieved by the iterates held by the agents. }
	\label{fig:logistic}
\end{figure} 
	
	\changesr{Figure~\ref{fig:logistic} shows a performance comparison between the methods discussed in this paper, for problem~\eqref{fun:logistic}. We assume a network of $100$ agents, that changes every $10$ iterations, where each instance is a random geometric graph simulating a group of sensors uniformly distributed over an area of unit length, and a radius that guarantees connectivity of the network. Each agent is assigned a random sample of $100$ data points from each of the datasets. For each of the methods, and each of the datasets, we show the range of the function value among all agents. That is, the width of each band corresponds to the values achieved by the current iterates held by the agents. A narrow band represents a relatively high consensus.}
	
	\section*{acknowledgements}
	The authors thank Thinh T. Doan for his helpful comments.

	\bibliography{references,time_varying,opt_dec2}
	\bibliographystyle{IEEEtran}

\end{document}